\newtheorem{thm}{Theorem}[section]
\newtheorem{prop}[thm]{Proposition}
\newtheorem{lem}[thm]{Lemma}
\newtheorem{cor}[thm]{Corollary}
\numberwithin{equation}{section}
\theoremstyle{definition}
\newtheorem{definition}[thm]{Definition}
\newtheorem{remark}[thm]{Remark}
\newtheorem{ex}[thm]{Example}
\newcommand{\qqed}{\hspace*{\fill}$\Box$}
\newcommand{\im}{\operatorname{im}}
\newcommand{\Db}{{\rm D}^{\rm b}}
\newcommand{\Aut}{{\rm Aut}}
\newcommand{\Hom}{{\rm Hom}}
\newcommand{\Perf}{{\rm Perf}}
\newcommand{\Inf}{{\bf Inf}}
\newcommand{\Forg}{{\bf Forg}}
\newcommand{\id}{{\rm id}}
\newcommand{\coker}{{\rm coker}}
\newcommand{\coim}{{\rm coim}}
\newcommand{\ka}{{\mathcal A}}
\newcommand{\kb}{{\mathcal B}}
\newcommand{\kc}{{\mathcal C}}
\newcommand{\kf}{{\mathcal F}}
\newcommand{\kl}{{\mathcal L}}
\newcommand{\ko}{{\mathcal O}}
\newcommand{\kp}{{\mathcal P}}
\newcommand{\kq}{{\mathcal Q}}
\newcommand{\kt}{{\mathcal T}}
\newcommand{\IC}{\mathbb{C}}
\newcommand{\IZ}{\mathbb{Z}}
\DeclareMathOperator{\Coh}{\bf{Coh}}
\DeclareMathOperator{\QCoh}{\bf{QCoh}}
\DeclareMathOperator{\(Q)Coh}{\bf{(Q)Coh}}
\renewcommand{\to}{\xymatrix@1@=15pt{\ar[r]&}}
\renewcommand{\rightarrow}{\xymatrix@1@=15pt{\ar[r]&}}
\renewcommand{\mapsto}{\xymatrix@1@=15pt{\ar@{|->}[r]&}}
\renewcommand{\twoheadrightarrow}{\xymatrix@1@=15pt{\ar@{->>}[r]&}}
\renewcommand{\hookrightarrow}{\xymatrix@1@=15pt{\ar@{^(->}[r]&}}
\newcommand{\congpf}{\xymatrix@1@=15pt{\ar[r]^-\sim&}}
\renewcommand{\cong}{\simeq}
\begin{document}

\title[Linearisations of triangulated categories...]{Linearisations of triangulated categories with respect to finite group actions}
\author[P.\ Sosna]{Pawel Sosna}

\address{Dipartimento di Matematica ``F.\ Enriques'', Universit\`a degli Studi di Milano, Via Cesare Saldini 50,
20133 Milano, Italy}
\email{pawel.sosna@guest.unimi.it}

\subjclass[2010]{18E30, 14F05}

\keywords{triangulated categories, autoequivalences, enhancements, linearisations}

\begin{abstract} \noindent
Given an action of a finite group on a triangulated category, we investigate under which conditions one can construct a linearised triangulated category using DG-enhancements. In particular, if the group is a finite group of automorphisms of a smooth projective variety and the category is the bounded derived category of coherent sheaves, then our construction produces the bounded derived category of coherent sheaves on the smooth quotient variety resp.\ stack. We also consider the action given by the tensor product with a torsion canonical bundle and the action of a finite group on the category generated by a spherical object.

\vspace{-2mm}\end{abstract}
\maketitle

\maketitle
\let\thefootnote\relax\footnotetext{This work was supported by the research grant SO 1095/1-1 of the DFG (German Research Foundation).}

\section{Introduction}
Triangulated categories are ubiquitous in several areas of mathematics. However, it is also well-known that these categories are not rigid enough to perform certain operations: For example, the category of exact functors between triangulated categories is not itself triangulated. This note is concerned with defining a linearised triangulated category $\kt^G$ when the action of a group on a triangulated category $\kt$ is given.

One possible motivation for this comes from geometry. The bounded derived category $\Db(X)$ of a smooth projective variety $X$ has drawn a lot of attention in recent years since it encodes a lot of interesting geometric information which is not visible when working with the abelian category of sheaves. 
%For example, Fourier--Mukai partners of $X$, that is, varieties $Y$ such that $\Db(X)\cong \Db(Y)$, are geometrically different objects but still have a close connection with $X$. 
The group of autoequivalences $\Aut(\Db(X))$ is also a very interesting object to study. It is completely understood for varieties with ample canonical (or anticanonical) bundle, but its structure is unknown in general. Of course, similar questions can be asked about any triangulated category.

If the triangulated category happens to be $\Db(X)$ and the group $G$ is contained in $\Aut(X)$ and is finite, then a reasonable construction should produce $\Db([X/G])$, where $[X/G]$ is the smooth quotient variety resp.\ stack. On the other hand, if $G$ is not some group of automorphisms one might hope to extract some interesting geometry out of its action on $\Db(X)$. Unfortunately, we do not have many examples of finite groups acting on $\Db(X)$ at the moment, but since there are various sources for triangulated categories it is reasonable to hope that the techniques developed in this note will also be applicable in other situations.

The basic idea of the construction is to use linearisations. This is well-established for sheaves (or modules): If $G \subset \Aut(X)$ is finite, then the abelian category $\Coh^G(X)$ of linearised sheaves on $X$ is equivalent to $\Coh([X/G])$. We would like to have something similar for triangulated categories. However, there are two basic problems. The first is that a reasonable notion of a group acting on a category has to assign an autoequivalence to any group element (and the assignment is subject to some conditions), but the group $\Aut(\kt)$ is the set of autoequivalences modulo isomorphisms. Furthermore, it is fairly easy to see that the category of linearised objects (with respect to a reasonable group action) of $\kt$ is not necessarily triangulated, because cones are not functorial (a problem alluded to in the first paragraph). 

The remedy is to consider triangulated categories which are homotopy categories of pretriangulated DG-categories and autoequivalences which come from equivalences on the DG-level. Our main result can be roughly summarised as follows, see Proposition \ref{classical-picture} and Corollary \ref{Enriques-K3}.

\begin{thm}
Let $G$ be a finite group acting on a triangulated category $\kt\cong H^0(\ka)$ which is the homotopy category of a pretriangulated DG-category $\ka$ and such that the action of $G$ comes from an action on $\ka$. Then a linearised triangulated category $\kt_\ka^G$ can be constructed. If $\kt\cong\Db(X)$ and $G$ is a finite group of automorphisms of $X$, then $\Db(X)^G\cong \Db([X/G])$. Given a variety $S$ with a canonical bundle $\omega_S$ which is torsion of order $n$, the triangulated category linearised with respect to the action of $\IZ/n\IZ$, where one identifies $1$ with the autoequivalence given by the tensor product with $\omega_S$, is equivalent to the bounded derived category of the canonical cover.
\end{thm}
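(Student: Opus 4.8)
The plan is to reduce the triangulated assertion to the analogous equivalence of abelian categories and then to feed the latter into the construction of the first part of the theorem. Write $L=\omega_S$ and fix once and for all a trivialisation $L^{\otimes n}\cong\ko_S$. Recall the construction of the canonical cover: set $\kb=\bigoplus_{i=0}^{n-1}L^{-i}$, which becomes a sheaf of $\ko_S$-algebras via the obvious multiplication together with the chosen isomorphism $L^{-n}\cong\ko_S$, and let $\pi$ be the associated morphism from $\tilde S=\Spec_S\kb$ to $S$. Since $L^{\otimes n}$ is trivial the cover is finite flat and \'etale of degree $n$, in particular affine, and $\pi_*\ko_{\tilde S}=\kb$; moreover $\pi^*L\cong\ko_{\tilde S}$, so that $\omega_{\tilde S}=\pi^*\omega_S$ is trivial, which is the geometric reason the construction produces a Calabi--Yau-type cover (a K3 surface when $S$ is Enriques and $n=2$).

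First I would spell out the abelian picture. Endowing a coherent sheaf $E$ on $S$ with the structure of a $\kb$-module is the same as giving an action of $L^{-1}$, that is, an isomorphism $\lambda\colon E\cong L\otimes E$ such that the composite $E\cong L^{\otimes n}\otimes E\cong E$ built from $\lambda$ and the trivialisation is the identity. This is exactly a $\IZ/n\IZ$-linearisation of $E$ for the action in which $1$ acts by $-\otimes L$. As $\pi$ is affine, pushing forward identifies coherent sheaves on $\tilde S$ with coherent $\kb$-modules, and combining the two observations yields
\[
\Coh(\tilde S)\;\cong\;\kb\text{-mod}\;\cong\;\Coh^{\IZ/n\IZ}(S),
\]
under which $\pi_*$ corresponds to the functor forgetting the linearisation.

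Next I would promote this to the level of enhancements. Tensoring with the line bundle $L$ is an exact autoequivalence of $\Coh(S)$ and lifts to a DG-autoequivalence of a fixed DG-enhancement $\ka$ of $\Db(S)$, the trivialisation $L^{\otimes n}\cong\ko_S$ supplying the coherence data that turn it into a DG-action of $\IZ/n\IZ$; thus the general construction applies and produces $\Db(S)_\ka^{\IZ/n\IZ}$. Since $\pi$ is affine one has $\Db(\tilde S)\cong\Db(\kb\text{-mod})$, and I would take as comparison functor the derived pushforward $\R\pi_*$: for $F$ on $\tilde S$ the sheaf $\pi_*F$ is tautologically a $\kb$-module, hence comes with a linearisation, so $\R\pi_*$ refines to a functor from $\Db(\tilde S)$ to $\Db(S)_\ka^{\IZ/n\IZ}$. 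To conclude it suffices to show that $\Db(S)_\ka^{\IZ/n\IZ}$ is equivalent to $\Db(\Coh^{\IZ/n\IZ}(S))$, after which the abelian equivalence above finishes the argument.

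The step I expect to be the main obstacle is precisely this last comparison. By construction $\Db(S)_\ka^{\IZ/n\IZ}$ records linearisation data only up to coherent homotopy, whereas $\Coh^{\IZ/n\IZ}(S)$ is defined by strict cocycle identities, so one must rigidify the homotopy-coherent equivariance to an honest one. The fact to exploit is that the acting autoequivalence $-\otimes L$ is \emph{exact}: it preserves the standard $t$-structure, so a bounded complex equipped with a homotopy-coherent $\IZ/n\IZ$-linearisation can be replaced, compatibly with triangles, by a complex of genuinely linearised sheaves, which identifies $H^0$ of the DG-linearised category with $\Db(\Coh^{\IZ/n\IZ}(S))$. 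Carrying out this strictification and checking its compatibility with the triangulated structure is where the real work lies; the case $n=2$ with $S$ an Enriques surface then recovers the passage to the K3 cover recorded in Corollary~\ref{Enriques-K3}.
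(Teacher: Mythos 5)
Your abelian-level analysis (sheaves on $\widetilde{S}$ correspond to $\pi_*\ko_{\widetilde{S}}$-modules, which correspond to $\IZ/n\IZ$-linearised sheaves on $S$) reproduces the first half of the paper's argument for the canonical-cover statement. But your proposal stops exactly where the triangulated assertion actually needs proving: you reduce everything to the claim that $\Db(S)_\ka^{\IZ/n\IZ}$ is equivalent to $\Db(\Coh^{\IZ/n\IZ}(S))$, and then explicitly defer that claim as ``where the real work lies.'' That comparison is not a routine consequence of the exactness of $-\otimes\omega_S$; it is the entire content of the statement, so as written the proof has a genuine gap.

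You also misdiagnose what the missing step is. In the paper's construction the linearisations in $\ka^G$ are \emph{strict} by definition --- a linearisation is a family of closed degree-zero isomorphisms $\lambda_g\colon A\to g^*(A)$ satisfying the cocycle identity on the nose --- so there is no homotopy-coherent datum to rigidify. The real issue is to show that every object of $\kt^G_\ka=H^0((\ka^G)^{pretr})$ is isomorphic to one coming from $\Db(\widetilde{S})$, and the paper's device for this is the specific choice of enhancement $\ka=C_{DG}(I(S))$ by complexes of injectives. With that choice: (i) $-\otimes\omega_S$ is a strict DG-autoequivalence preserving injectives; (ii) a strictly linearised complex of injectives is literally a complex of injective objects of $\Coh^{\IZ/n\IZ}(S)\cong\Coh(\widetilde{S})$ (the linearisation is termwise, and one needs that $\pi_*$ of an injective sheaf on $\widetilde{S}$ is an injective linearised sheaf and conversely --- the analogue of the Lemma in Section \ref{classical}), which yields a DG-equivalence between $\Db_{DG}(\widetilde{S})$ and $\ka^G$; and (iii) $\ka^G$ is strongly pretriangulated by Proposition \ref{linearisations-pretriangulated}, so $H^0((\ka^G)^{pretr})\cong H^0(\ka^G)\cong\Db(\widetilde{S})$. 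Your proposal never fixes the enhancement, and for a general $\ka$ the identification you want is not available --- the paper itself warns, before Proposition \ref{ind-enh}, that $\kt^G_\ka$ may depend on the choice of $\ka$. Finally, the theorem also asserts $\Db(X)^G\cong\Db([X/G])$ for a finite $G\subset\Aut(X)$; your proposal does not address this part at all, although it is proved by the same injective-enhancement mechanism (Proposition \ref{classical-picture}).
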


It is in general not clear whether the above result depends on the choice of the category $\ka$ (so the above statements have to be read as involving specific choices of $\ka$), but see Proposition \ref{ind-enh} for a partial result. In any case, choosing a fairly natural $\ka$ we can prove the following.
For example, given a Fourier--Mukai partner $Y$ of $X$ and a group $G$ acting on $Y$, the linearised category $\Db(X)^G$ with respect to the action of $G$ induced by the Fourier--Mukai equivalence is equivalent to $\Db([Y/G])$, see Corollary \ref{FM-quotient}.

Lastly, the category generated by a spherical object admits actions of finite groups and we prove that the spherical object becomes an exceptional one in the linearised category, see Proposition \ref{sph-exc}.

\begin{remark}
If the group was not finite, one would have to adjust certain things: For example, we would have to work with (the derived category of) quasi-coherent sheaves, but the approach would still work.
\end{remark}

The note is organised as follows. In Section 2 we recall some basic facts about DG-categories, define the linearised triangulated category in the following section, consider the above mentioned geometric situation in Section 4 and look at new examples in the last section.

\smallskip
\noindent{\bf{Conventions.}} From Section 3 on we work over the field of complex numbers (although most of the results hold over an arbitrary field provided the order of the group is prime to the characteristic). All functors between derived categories are assumed to be exact. Unless stated otherwise all considered groups are finite. 
\smallskip

\noindent{\bf{Acknowledgements.}} I thank David Ploog and Paolo Stellari for useful discussions and for comments on a preliminary version of this paper and the department of mathematics and the complex geometry group of the Universit\`{a} degli Studi di Milano for their hospitality.

\section{Differential graded categories}
In this section we recall the necessary notions and facts from the theory of differential graded categories. For details see e.g.\ \cite{Drinfeld}, \cite{Keller} or \cite{LO}.

\begin{definition}
A \emph{differential graded category} or \emph{DG-category} over a field $K$ is a $K$-linear additive category $\ka$ such that for any two objects $X, Y \in \ka$ the space of morphisms $\Hom(X,Y)$ is a complex, the composition of morphisms 
\[\Hom(X,Y)\otimes \Hom(Y,Z) \rightarrow \Hom(X,Z)\]
is a chain map and the identity with respect to the composition is closed of degree $0$.
\end{definition}

\begin{ex}
The most basic example of a $K$-linear DG-category is the category of complexes of $K$-vector spaces. For two complexes $X$ and $Y$ we define $\Hom(X,Y)^n$ to be the $K$-vector space formed by families $\alpha=(\alpha^p)$ of morphisms $\alpha^p: X^p \rightarrow Y^{p+n}$, $p \in \IZ$. We define $\Hom_{DG}(X,Y)$ to be the graded $K$-vector space with components $\Hom(X,Y)^n$ and whose differential is given by
\[d(\alpha)=d_Y\circ \alpha -(-1)^n\alpha \circ d_X.\]
The DG-category $C_{DG}(K)$ has as objects complexes and the morphisms are defined by
\[C_{DG}(K)(X,Y)=\Hom_{DG}(X,Y).\]
Of course, starting with the category of complexes over an arbitrary $K$-linear abelian (or additive) category one can associate a DG-category to it in a similar manner.

Clearly, we get back the usual category of complexes by taking as morphisms only the closed morphisms of degree zero and we get the usual homotopy category if we replace $\Hom_{DG}(X,Y)$ by $\ker(d^0)/\im(d^{-1})$. 
\end{ex}

A \emph{DG-functor} $\Phi\colon \ka \rightarrow \kb$ between DG-categories $\ka$ and $\kb$ is by definition required to be compatible with the structure of complexes on the spaces of morphisms. If $\Phi,\Psi\colon \ka \rightarrow \kb$ are two DG-functors, then we define \emph{the complex of graded morphisms} $\Hom(\Phi, \Psi)$ to be the complex whose $n$th component is the space formed by families of morphisms $\phi_X \in \Hom_\kb(\Phi(X),\Psi(X))^n$ such that $(\Psi\alpha)(\phi_X)=(\phi_Y)(\Phi\alpha)$ for all $\alpha \in \Hom_\ka(X,Y)$, where $X,Y \in \ka$. The differential is given by that of $\Hom_\kb(\Phi(X),\Psi(X))$. Using this we define the DG-category of DG-functors from $\ka$ to $\kb$, denoted by $\Hom(\ka,\kb)$, to be the category with DG-functors as objects and the above described spaces as morphisms. Note that the DG-functors between $\ka$ and $\kb$ are precisely the closed morphisms of degree zero in $\Hom(\ka,\kb)$.

To any DG-category $\ka$ one can naturally associate two other categories: Firstly, there is the \emph{graded category} $Ho^\bullet(\ka)=H^\bullet(\ka)$ having the same objects as $\ka$ and where the space of morphisms between two objects $X, Y$ is by definition the direct sum of the cohomologies of the complex $\Hom_\ka(X, Y)$. Secondly, restricting to the cohomology in degree zero we get the \emph{homotopy category} $Ho(\ka)=H^0(\ka)$. 

\begin{definition}\label{quasi}
A DG-functor $\Phi\colon \ka \rightarrow \kb$ is \emph{quasi fully faithful} if for any two objects $X,Y$ in $\ka$ the map 
\[\Hom(X,Y) \rightarrow \Hom(\Phi(X), \Phi(Y))\]
is a quasi-isomorphism and $\Phi$ is a \emph{quasi-equivalence} if in addition the induced functor $H^0(\Phi)$ is essentially surjective. Two DG-categories $\ka$ and $\kb$ are called \emph{quasi-equivalent} if there exist DG-categories $\kc_1,\ldots, \kc_n$ and a chain of quasi-equivalences $\begin{xy}\xymatrix{\ka & \kc_1 \ar[l] \ar[r] & \cdots & \kc_n \ar[l] \ar[r] & \kb.}\end{xy}$

A DG-functor $\Phi\colon \ka \rightarrow \kb$ is a \emph{DG-equivalence} if it is fully faithful and for every object $B \in \kb$ there is a closed isomorphism of degree 0 between $B$ and an object of $\Phi(\ka)$.
\end{definition}

We also have to recall the following construction from \cite{BK}.

\begin{definition}
Let $\ka$ be a DG-category. Define the \emph{pretriangulated hull} $\ka^{pretr}$ of $\ka$ to be the following category. Its objects are formal expressions $(\oplus_{i=1}^n C_i[r_i],q)$, where $C_i \in \ka$, $r_i \in \IZ$, $n\geq 0$, $q=(q_{ij})$, $q_{ij} \in \Hom(C_j, C_i)[r_i-r_j]$ is homogeneous of degree 1, $q_{ij}=0$ for $i \geq j$, $dq+q^2=0$. If $C=(\oplus_{j=1}^n C_j[r_j],q)$ and $C'=(\oplus_{i=1}^m C'_i[r'_i],q')$ are objects in $\ka^{pretr}$, then the $\IZ$-graded $K$-module $\Hom(C,C')$ is the space of matrices $f=(f_{ij})$, $f_{ij} \in \Hom(C_j,C'_i)[r'_i-r_j]$ and the composition map is matrix multiplication. The differential $d\colon \Hom(C,C') \rightarrow \Hom(C,C')$ is defined by $d(f)=(df_{ij})+q'f-(-1)^lfq$ if $\deg f_{ij}=l$. The category $\ka$ is called \emph{pretriangulated} if the natural fully faithful functor $\Psi\colon \ka \rightarrow \ka^{pretr}$ is a quasi-equivalence and $\ka$ is \emph{strongly pretriangulated} if $\Psi$ is a DG-equivalence. 
\end{definition}

The reason for introducing the pretriangulated hull is that its homotopy category is always triangulated. Thus, we have the following

\begin{definition}
Let $\ka$ be a DG-category. The associated triangulated category is $\ka^{tr}:=H^0(\ka^{pretr})$.
\end{definition}

%\begin{remark}
%There is an equivalent way of describing the pretriangulated hull. To do this, recall that for any $K$-linear DG-category $\ka$ the category of contravariant DG-functors $\Hom(\ka^{op}, C_{DG}(K))$ is called the category of right \emph{DG-modules} and denoted by $\Mo(\ka)$. As in the classical setting there is a Yoneda embedding $\ka \rightarrow \Mo(\ka)$ and an element in the image is called \emph{representable}. A DG-module $\Phi$ is called \emph{semi-free} if there exists a filtration $0=\Phi_0 \subset \Phi_1 \subset \cdots \subset \Phi$ such that $\Phi_{k+1}/ \Phi_{k}$ is isomorphic to a direct sum of shifts of representable modules. A semi-free DG-module is \emph{finitely generated} if $\Phi_{n}=\Phi_{n+1}$ for all $n \gg 0$ and $\Phi_{k+1}/ \Phi_{k}$ is a finite direct sum. As explained in \cite{BK} there is a canonical embedding $\ka^{pretr} \rightarrow \Mo(\ka)$ and under this embedding $\ka^{pretr}$ is DG-equivalent to the category of semi-free finitely generated DG-modules (cf.\ \cite{Drinfeld}).  
%\end{remark}

%It is possible to localise the 2-category of DG-categories with respect to the quasi-equivalences and the morphisms between two DG-categories $\ka$ and $\kb$ in the localisation are in a natural bijection with isomorphism classes of so-called quasi-functors, see \cite{Toen}. A \emph{quasi-functor} can be represented by a DG-functor $\ka \rightarrow \Mo(\kb)$ whose essential image consists of DG-functors which are quasi-isomorphic to representable ones. Note that any quasi-functor induces a functor $H^0(\ka)\rightarrow H^0(\kb)$.
Finally we have the following notion.

\begin{definition}
Let $\kt$ be a triangulated category. An \emph{enhancement} of $\kt$ is a pair $(\ka,\epsilon)$, where $\ka$ is a pretriangulated DG-category and $\begin{xy}\xymatrix{\epsilon\colon H^0(\ka) \ar[r]^(.6)\sim & \kt}\end{xy}$ is an equivalence of triangulated categories.

The category $\kt$ is said to have a unique enhancement if it has one and for two enhancements $(\ka,\epsilon)$ and $(\ka',\epsilon')$ there exists a quasi-functor (see \cite{LO}) $\phi\colon \ka \rightarrow \ka'$ which induces an equivalence $H^0(\phi)\colon H^0(\ka)\rightarrow H^0(\ka')$. One then calls the two enhancements \emph{equivalent}. Two enhancements are called \emph{strongly equivalent} if there exists a quasi-functor $\phi$ such that $\epsilon'\circ H^0(\phi)$ and $\epsilon$ are isomorphic. 
\end{definition}

If $\kt\cong \Db(X)$ for $X$ a smooth projective variety (or a smooth stack), the enhancement one usually works with is
\begin{equation}
\ka:=\Db_{DG}(X):=C_{DG}(I(X)),
\end{equation}
where $C_{DG}(I(X))$ is the DG-category of bounded-below complexes of injective sheaves with bounded coherent cohomology.

%A reformulation of the above is the following: Two enhancements are identified if there exists a chain as in Definition \ref{quasi} where all the $\kc_i$ are enhancements as well.

%\begin{remark}
%According to (the more general) \cite[Thm.\ 9.9]{LO} the category $\Db(X)$ has a strongly unique enhancement if $X$ is a smooth and projective variety.
%\end{remark}

Denote the two projections from $X\times X$ to $X$ by $q$ and $p$. Let $F\colon \Db(X)\rightarrow \Db(X)$ be an equivalence. By results of Orlov (\cite{Orlov}, \cite{Orlov2}) we know that $F$ is of \emph{Fourier--Mukai type}, that is, there exists a unique (up to isomorphism) object $\kp \in \Db(X\times X)$, called the \emph{kernel}, such that $F\cong \Phi_\kp$, where $\Phi_\kp(E)=p_*(q^*E\otimes \kp)$ for any $E\in \Db(X)$. It is clear that any equivalence of FM-type lifts to a DG-endofunctor of $\Db_{DG}(X)$. In fact, the DG-lifts of the three standard autoequivalences (shifts, automorphisms and line bundle twists) do lift to DG-equivalences of the above described enhancement, but for a general autoequivalence this is not clear. Given an arbitrary triangulated category the existence and/or uniqueness of an enhancement is not known (but see \cite{LO} for several results) and the question whether an exact functor lifts to the DG-enhancement is also open.

\section{Linearisations}

Let $G$ be a group and $\kc$ be any category. The following notions are based on Deligne's article \cite{Deligne}. A \emph{weak action} of $G$ on $\kc$ is the assignment of an autoequivalence $g^*$ to any element $g\in G$ such that there exist isomorphisms of functors $c_{g,h}\colon (gh)^*\cong h^*g^*$ for all $g,h \in G$. Note that this, in particular, implies that $1^*\cong \id_\kc$. An \emph{action} of $G$ on $\kc$ is a weak action such that the isomorphisms $c_{g,h}$ satisfy an associativity condition:
\[
\begin{xy}
\xymatrix{ (ghi)^* \ar[d] \ar[r] & i^*(gh)^*\ar[d] \\
					(hi)^*g^* \ar[r] & i^*h^*g^*.}
\end{xy}
\]

Of course, there is a ``covariant'' version of the above definition.

%Working with DG-categories we will adopt a notion which looks slightly different at first sight.

%The reason for this notion is that the lift of an equivalence of $\kt$ can rarely be expected to be a DG-equivalence in general. Note that one usually works with the localization of the category of DG-categories with respect to quasi-equivalences and hence our restriction to honest DG-functors might be considered a bit unfortunate but is necessary for our purposes.

\begin{definition}
A \emph{linearisation} of an object $A$ of a category $\ka$ consists of a collection of morphisms $\lambda_g\colon A\rightarrow g^*(A)$ in $\ka$ for each $g\in G$ which satisfy the following: $\lambda_1=\id$ and $\lambda_{gh}=h^*(\lambda_g)\lambda_h$, that is 
\[\begin{xy}\xymatrix{ A \ar@/_1.5pc/[rrrr]^{\lambda_{gh}}\ar[rr]^{\lambda_h} && h^*(A) \ar[rr]^{h^*(\lambda_g)} && h^*(g^*(A)).}\end{xy}\]

A pair $(A,\lambda_g)$ will be called a \emph{linearised object}. We define a morphism between two linearised objects $(A,\lambda_g)$ and $(A',\lambda'_g)$ to be a $G$-invariant morphism, that is, a morphism $\varphi\colon A\rightarrow A'$ in $\ka$ such that the following diagram commutes in $\ka$ for all $g \in G$:
\[
\begin{xy}
\xymatrix{ A \ar[d]_{\lambda_g} \ar[r]^{\varphi} & A' \ar[d]^{\lambda'_g}\\
					g^*(A) \ar[r]^{g^*(\varphi)} & g^*(A').}
\end{xy}
\]
\end{definition}

Thus, we have a category of linearised objects $\ka^G$. The linearised category inherits properties of $\ka$ if $\ka$ is ``rigid'' enough, for example we have the following

\begin{prop}\label{lin-abelian}
If $G$ acts on an abelian category $\ka$ by exact autoequivalences, then $\ka^G$ is also an abelian category.
\end{prop}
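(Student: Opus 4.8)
The plan is to build every piece of the abelian structure on $\ka^G$ from the corresponding construction in $\ka$, using the forgetful functor $\Forg\colon \ka^G\rightarrow \ka$, $(A,\lambda_g)\mapsto A$, as the organising tool. Since each $g^*$ is an additive autoequivalence, the biproduct $(A\oplus A',\lambda_g\oplus\lambda'_g)$, formed via the canonical isomorphism $g^*(A\oplus A')\cong g^*(A)\oplus g^*(A')$, is again linearised, the zero object carries the trivial linearisation, and the $G$-invariant morphisms form a subgroup of $\Hom_\ka(A,A')$ closed under composition. This makes $\ka^G$ additive and $\Forg$ an additive, faithful functor.

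First I would construct kernels. Given a $G$-invariant morphism $\varphi\colon(A,\lambda_g)\rightarrow(A',\lambda'_g)$, let $\iota\colon K\rightarrow A$ be its kernel in $\ka$. The crucial input is that each $g^*$ is exact, hence preserves kernels, so that $g^*(\iota)\colon g^*(K)\rightarrow g^*(A)$ is the kernel of $g^*(\varphi)$. Now $g^*(\varphi)\comp\lambda_g\comp\iota=\lambda'_g\comp\varphi\comp\iota=0$ by the invariance of $\varphi$, so $\lambda_g\comp\iota$ factors uniquely through $g^*(K)$; this defines $\mu_g\colon K\rightarrow g^*(K)$ with $g^*(\iota)\comp\mu_g=\lambda_g\comp\iota$ and makes $\iota$ invariant. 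The cocycle identities $\mu_1=\id$ and $\mu_{gh}=h^*(\mu_g)\mu_h$, compatibly with the coherence isomorphisms $c_{g,h}$, then follow from the corresponding identities for $\lambda_g$ together with the uniqueness of factorisations through a kernel. A further uniqueness argument shows that any invariant $\psi$ with $\varphi\psi=0$ factors uniquely through $(K,\mu_g)$ inside $\ka^G$, so $(K,\mu_g)$ is the kernel of $\varphi$ in $\ka^G$ and $\Forg$ preserves it. The construction of cokernels is formally dual, using that $g^*$ also preserves cokernels.

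It remains to verify that in $\ka^G$ every monomorphism is the kernel of its cokernel and every epimorphism the cokernel of its kernel. Here I would exploit that $\Forg$ is faithful, exact (it commutes with the kernels and cokernels just built), and reflects isomorphisms, since the inverse in $\ka$ of an invariant isomorphism is automatically invariant. Faithfulness and exactness imply that $\varphi$ is mono in $\ka^G$ iff $\ker\varphi=0$ iff $\Forg(\varphi)$ is mono in $\ka$, and dually for epis. For a mono $\varphi$, the underlying object of $\ker(\coker\varphi)$ is $\im(\Forg\varphi)=A$, and the canonical comparison morphism $(A,\lambda_g)\rightarrow\ker(\coker\varphi)$ in $\ka^G$ becomes an isomorphism under $\Forg$; since $\Forg$ reflects isomorphisms, it is an isomorphism in $\ka^G$. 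The dual statement for epis is analogous, and together these give the last abelian axiom.

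I expect the main obstacle to be the bookkeeping in the second step: checking that the induced linearisation $\mu_g$ on a kernel genuinely satisfies the cocycle condition relative to the coherence data $c_{g,h}$, and that the kernel's universal property holds at the level of invariant morphisms rather than merely in $\ka$. Everything there rests on the single structural fact that exact functors preserve kernels and cokernels, which is exactly where the hypothesis that $G$ acts by \emph{exact} autoequivalences enters; once this is in place, the remaining verifications are formal.
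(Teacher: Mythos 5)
Your proposal is correct and follows essentially the same route as the paper: biproducts and the zero object are immediate, kernels and cokernels are linearised via their universal properties in $\ka$ together with exactness of each $g^*$, and the final abelian axiom is settled by noting that the canonical comparison morphism is $G$-invariant and becomes an isomorphism in $\ka$, hence is one in $\ka^G$ (your ``$\Forg$ reflects isomorphisms'' is the same mechanism as the paper's check that the image--coimage isomorphism is invariant, since the inverse of an invariant isomorphism is invariant). Your write-up is simply a more detailed execution of the paper's sketch, with the last axiom phrased as ``every mono is the kernel of its cokernel'' instead of ``image equals coimage''.
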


\begin{proof}
The existence of direct sums and the zero object is obvious. Given a $G$-invariant morphism $\varphi\colon (A,\lambda_g)\rightarrow (A',\lambda'_g)$ the universal properties of the kernel and the cokernel in $\ka$ ensure that $\ker(\varphi)$ and $\coker(\varphi)$ are canonically linearised and the respective morphisms are $G$-invariant. Hence, kernels and cokernels exist. The most interesting part is the equality of the image and the coimage: By the above arguments these objects are linearised and it can be checked, using that the kernel is a monomorphism, that the isomorphism in $\ka$ between $\im(\varphi)$ and $\coim(\varphi)$ is $G$-invariant. 
\end{proof}

If $G$ acts on a triangulated category $\kt$ by exact autoequivalences, then, given a $G$-invariant map $\varphi$, it is not clear how to linearise a cone of this map. Therefore we will take the detour via DG-categories.

There is the following easy result.

\begin{prop}
If $\ka$ is a DG-category with an action by a group $G$, then the category of linearised objects as defined above is a DG-category.
\end{prop}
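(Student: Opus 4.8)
The plan is to show that the category of linearised objects $\ka^G$ inherits a DG-structure directly from $\ka$, essentially by transporting the complex structure on morphism spaces through the forgetful functor. First I would fix notation: an object of $\ka^G$ is a pair $(A,\lambda_\bullet)$ with $\lambda_g\colon A\rightarrow g^*(A)$ satisfying $\lambda_1=\id$ and the cocycle condition $\lambda_{gh}=h^*(\lambda_g)\lambda_h$, and I emphasise that here the $g^*$ are DG-functors, so each $g^*$ acts on the full morphism complexes $\Hom_\ka(X,Y)$ as a chain map and the isomorphisms $c_{g,h}$ are closed of degree $0$. The key point is that the linearisation data $\lambda_g$ should be taken to be closed morphisms of degree $0$, so that the defining equations make sense and are preserved.

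The central step is to define, for two linearised objects $(A,\lambda_\bullet)$ and $(A',\lambda'_\bullet)$, the morphism complex in $\ka^G$ and check it is genuinely a subcomplex of $\Hom_\ka(A,A')$. The natural definition is
\[
\Hom_{\ka^G}\bigl((A,\lambda_\bullet),(A',\lambda'_\bullet)\bigr)^n
=\bigl\{\,\varphi\in\Hom_\ka(A,A')^n \;\big|\; \lambda'_g\circ\varphi=g^*(\varphi)\circ\lambda_g \text{ for all } g\in G\,\bigr\},
\]
i.e.\ the graded $G$-invariant morphisms in all degrees, not merely the degree-$0$ closed ones. I would then verify that this graded subspace is closed under the differential $d$ of $\Hom_\ka(A,A')$: given $\varphi$ of degree $n$ satisfying the invariance relation, one computes $d\varphi$ and uses that $\lambda_g$, $\lambda'_g$ are closed (degree $0$) together with the fact that $g^*$ commutes with the differential (being a DG-functor) to conclude that $d\varphi$ again satisfies the relation. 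This is the main technical verification, and I expect it to be the only place where a genuine computation is needed; it hinges precisely on $\lambda_g$ being closed so that applying $d$ to the identity $\lambda'_g\varphi=g^*(\varphi)\lambda_g$ and invoking the Leibniz rule yields $\lambda'_g(d\varphi)=g^*(d\varphi)\lambda_g$ with no correction terms.

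Having established that the $\Hom$-spaces are complexes, the remaining axioms of a DG-category are routine and I would treat them briefly. Composition in $\ka^G$ is induced by composition in $\ka$; one checks that the composite of two graded $G$-invariant morphisms is again $G$-invariant (a direct substitution using the two invariance relations), and that this composition is a chain map because it is the restriction of the composition chain map of $\ka$. The identity morphism of $(A,\lambda_\bullet)$ is $\id_A$, which is $G$-invariant since $g^*(\id_A)=\id_{g^*(A)}$ and is closed of degree $0$ as in $\ka$. Finally, $K$-linearity and additivity of $\ka^G$ follow from those of $\ka$ together with the observation that the invariance condition is $K$-linear in $\varphi$, so the $\Hom$-complexes are $K$-subcomplexes.

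The one conceptual subtlety worth flagging is the role of the associator isomorphisms $c_{g,h}$: because the $g^*$ only compose up to the closed degree-$0$ isomorphisms $c_{g,h}$, the cocycle condition $\lambda_{gh}=h^*(\lambda_g)\lambda_h$ must be read with the appropriate insertion of $c_{g,h}$, and one should confirm that the invariance condition defining morphisms is compatible with this coherence. Since the $c_{g,h}$ are closed of degree $0$, they interact trivially with the differential, so they do not disturb the subcomplex argument above; they only need to be carried along formally. I therefore expect no essential obstacle beyond the single Leibniz-rule computation, and the proposition reduces to the statement that taking $G$-invariant morphisms is an operation internal to the DG-world precisely because all structural data ($\lambda_g$, $c_{g,h}$, identities) are closed of degree $0$.
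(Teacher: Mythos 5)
Your proof is correct and follows essentially the same route as the paper: the heart of the argument in both cases is that the graded $G$-invariant morphisms form a subcomplex of $\Hom_\ka(A,A')$, verified by applying the differential to the invariance relation $\lambda'_g\circ\varphi=g^*(\varphi)\circ\lambda_g$ and using the Leibniz rule, the closedness of the degree-$0$ linearisation maps, and the fact that each $g^*$ is a DG-functor. Your additional remarks on composition, identities, and the coherence isomorphisms $c_{g,h}$ are sound elaborations of points the paper leaves implicit.
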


\begin{proof}
We only need to prove that the space of morphisms has the structure of a complex. This boils down to proving that for any morphism $\varphi\colon (A,\lambda_g)\rightarrow (A',\lambda'_g)$ the morphism $d(\varphi)\colon A \rightarrow A'$ is also compatible with the linearisations. Since $\lambda_g$ is a DG-isomorphism for all $g \in G$ we, in particular, have that any $\lambda_g$ is closed, that is, $d(\lambda_g)=0$ for all $g\in G$. Now one only has to use the Leibniz rule, the fact that $\lambda'_g$ has degree $0$ and that any $g \in G$ defines a DG-functor and therefore is compatible with the differentials:
\[\lambda'_g \circ \varphi=g^*(\varphi)\circ \lambda_g \Longrightarrow \lambda'_g\circ d(\varphi)=d(\lambda'_g \circ \varphi)=d(g^*(\varphi)\circ \lambda_g)=g^*(d(\varphi))\circ\lambda_g.\]
\end{proof}

\begin{remark}
Given an action of an algebraic group $G$ on a variety $X$ denote the action by $\sigma\colon G\times X\rightarrow X$ and the multiplication by $\mu\colon G\times G\rightarrow G$. One defines a linearisation of a sheaf $\kf$ in this case to be an isomorphism $\lambda\colon \sigma^*\kf\rightarrow p_2^*\kf$ of $\ko_{G\times X}$-modules subject to the cocycle condition $(\mu\times \id_X)^*\lambda=p_{23}^*\lambda\circ (\sigma\times \id_G)^*\lambda$, where $p_2\colon G\times X \rightarrow X$ and $p_{23}\colon G\times G\times X\rightarrow G\times X$ are the projections. For a finite (or discrete) group this reduces to isomorphisms $\kf\rightarrow g^*\kf$ as above. Hence, we should not expect the above construction to be compatible with geometry in the case of an arbitrary group (there is a notion of an equivariant derived category in this case, see \cite{BL}).
\end{remark}

\begin{definition}
We define the \emph{forgetful functor} $\Forg$ as the functor $\ka^G\rightarrow \ka$ which forgets the linearisations. The \emph{inflation functor} $\Inf$ from $\ka$ to $\ka^G$ is the functor which on objects is defined by $A\mapsto \oplus_g g^*(A)$.
\end{definition}

\begin{remark}
Given a subgroup $H\subset G$, we have an obvious DG-functor $\ka^G\rightarrow \ka^H$. This functor is clearly faithful, but the case $H=\left\{1\right\}$ shows that it is not essentially surjective in general.
\end{remark}

Note that, since there always exists an extension of a DG functor to a DG functor on the pretriangulated hulls, the action of $G$ on $\ka$ naturally extends to an action on $\ka^{pretr}$. Explicitly, the action of $G$ on the formal shifts is clear and for an object $(\oplus_{i=1}^n C_i[r_i],q)$ in $\ka^{pretr}$ we simply apply an element $g$ in $G$ to the components of $q$. One then checks that the condition $d(q)+q^2=0$ induces that $d(g^*(q))+g^*(q)^2=0$ (to see this note that $g^*(\alpha\beta)=g^*(\alpha)g^*(\beta)$ and similarly for sums; hence $g^*$ is compatible with matrix multiplication) and hence the action is well-defined. The definition of the action on morphisms is similarly straightforward. %Since $G$ acts by quasi-equivalences on $\ka$, it is still the case for its action on $\ka^{pretr}$. We have the following useful

\begin{prop}\label{linearisations-pretriangulated}
If $G$ is a group acting on a strongly pretriangulated DG-category $\ka$, then $\ka^G$ is also strongly pretriangulated.
\end{prop}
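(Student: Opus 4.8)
The plan is to verify directly that the natural functor $\Psi\colon \ka^G \rightarrow (\ka^G)^{pretr}$ is a DG-equivalence. Since $\Psi$ is fully faithful for any DG-category, it suffices to produce, for every object of $(\ka^G)^{pretr}$, a closed isomorphism of degree $0$ to an object of the form $\Psi(A,\mu)$ for some linearised object $(A,\mu)$ of $\ka$. The idea is to forget the linearisations, diagonalise the resulting twisted complex using the hypothesis that $\ka$ is strongly pretriangulated, and then transport the linearisation back along the isomorphism so obtained.

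First I would make explicit the identification of objects of $(\ka^G)^{pretr}$ with linearised twisted complexes. An object $X=(\oplus_i (B_i,\lambda^i)[r_i],q)$ of $(\ka^G)^{pretr}$ has entries $q_{ij}$ that are morphisms in the DG-category $\ka^G$, that is, morphisms of $\ka$ satisfying $\lambda^i_g q_{ij}=g^*(q_{ij})\lambda^j_g$ for all $g$. Forgetting the linearisations yields the twisted complex $\bar X=(\oplus_i B_i[r_i],q)$ in $\ka^{pretr}$, on which $G$ acts by the extension described before the proposition, and I would equip $\bar X$ with the block-diagonal candidate linearisation $\Lambda_g:=\mathrm{diag}(\lambda^i_g)\colon \bar X\rightarrow g^*\bar X$. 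A short matrix computation shows that the closedness of $\Lambda_g$ as a degree-$0$ morphism of $\ka^{pretr}$, namely the identity $(g^*q)\Lambda_g=\Lambda_g q$, is entrywise exactly the condition $g^*(q_{ij})\lambda^j_g=\lambda^i_g q_{ij}$; thus $\Lambda_g$ is a morphism of $\ka^{pretr}$ precisely because each $q_{ij}$ is $G$-invariant, and the linearisation cocycle for $\Lambda$ is inherited diagonally from the $\lambda^i$. Hence $(\bar X,\Lambda)$ is a linearised object of $\ka^{pretr}$ with underlying object $\bar X$.

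Since $\ka$ is strongly pretriangulated, the twisted complex $\bar X$ is closed-degree-$0$ isomorphic in $\ka^{pretr}$ to $\Psi_\ka(A)=(A,0)$ for some $A\in \ka$; call the isomorphism $\beta\colon \bar X \rightarrow (A,0)$. I would then transport $\Lambda$ along $\beta$ by setting $\mu_g:=(g^*\beta)\circ\Lambda_g\circ\beta^{-1}$, a closed degree-$0$ morphism $A\rightarrow g^*A$ of $\ka$. One checks $\mu_1=\id$ and the cocycle identity $\mu_{gh}=h^*(\mu_g)\mu_h$ from the corresponding identities for $\Lambda$ and the functoriality of $g^*$, so that $(A,\mu)$ is a linearised object. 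By construction $\mu_g\circ\beta=g^*(\beta)\circ\Lambda_g$, which is exactly the assertion that $\beta$ is $G$-invariant, hence a closed degree-$0$ isomorphism $X\rightarrow \Psi(A,\mu)$ in $(\ka^G)^{pretr}$. This establishes DG-essential surjectivity and completes the argument.

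The routine but delicate point, and the step I expect to be the main obstacle, is the verification of the cocycle identity $\mu_{gh}=h^*(\mu_g)\mu_h$: it uses $\Lambda_{gh}=h^*(\Lambda_g)\Lambda_h$ together with the comparison of $(gh)^*\beta$ and $h^*g^*\beta$. If the $G$-action is strict this is immediate; in the presence of the coherence isomorphisms $c_{g,h}\colon (gh)^*\cong h^*g^*$ one must insert these carefully and check that they cancel in the same way they are already suppressed in the paper's convention $\lambda_{gh}=h^*(\lambda_g)\lambda_h$. Once this bookkeeping is in place, everything else is formal.
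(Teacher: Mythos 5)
Your proof is correct, and its engine is the same as the paper's: equip the underlying object of $\ka^{pretr}$ with the diagonal linearisation, obtain a closed degree-zero isomorphism $\beta$ to an object of $\ka$ from strong pretriangulatedness, and transport the linearisation along $\beta$. The difference is one of scope. The paper checks only the formal shift of a linearised object and (declared ``similar'') the cone of a closed degree-zero morphism, implicitly relying on the fact that every object of $(\ka^G)^{pretr}$ is built from objects of $\ka^G$ by iterating these two operations, so an induction on the length of twisted complexes is hidden in the background; you instead verify DG-essential surjectivity of $\Psi\colon \ka^G \rightarrow (\ka^G)^{pretr}$ directly on an arbitrary twisted complex, correctly identifying closedness of $\Lambda_g=\mathrm{diag}(\lambda^i_g)$ as a morphism $\bar X\rightarrow g^*\bar X$ with the entrywise $G$-invariance of the $q_{ij}$. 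Your route buys completeness: it removes the unstated generation/induction step, and it makes explicit exactly the structure the paper only displays in the remark following the proposition (the diagonal linearisation $\gamma_g$ on the cone, and why a mere homotopy equivalence --- i.e.\ dropping ``strongly'' --- would break the transport step). The paper's route buys brevity. Two points you should still write out: (i) the identification of degree-$i$ morphisms in $(\ka^G)^{pretr}$ with $G$-invariant degree-$i$ morphisms of the underlying twisted complexes (the same entrywise computation you did for $q$), which is what turns your $G$-invariant $\beta$ into a closed degree-zero isomorphism $X\rightarrow \Psi(A,\mu)$ in $(\ka^G)^{pretr}$; and (ii) in the non-strict case, the cocycle for $\mu_g=g^*(\beta)\circ\Lambda_g\circ\beta^{-1}$ follows from naturality of $c_{g,h}$ applied to $\beta$ --- precisely the cancellation you anticipate, and consistent with the paper's convention of suppressing these identifications (as in $\lambda_{gh}=h^*(\lambda_g)\lambda_h$).
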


\begin{proof}
Consider the formal shift of a linearised object $(A,\lambda_g)[1]$. We know that $A[1]$ is DG-isomorphic to an object $A'$ of $\ka$ and $\lambda_g$ induces a linearisation $\lambda'_g$ of $A'$. It is then clear that $(A,\lambda_g)[1]$ is DG-isomorphic to $(A',\lambda'_g)$. The reasoning for the cone of a closed degree zero morphism is similar.
\end{proof}

\begin{remark}
We cannot show that the statement of the lemma holds true without the ``strongly'' assumption.

One would need to show that for every $k \in \IZ$ and any $(A,\lambda_g)\in \ka^G$ the object $(A,\lambda_g)[1]$ is homotopy equivalent to an object in $\ka^G$ and similarly that the cone of any closed degree zero morphism also has this property. Let $\varphi\colon (A,\lambda_g)\rightarrow (A',\lambda'_g)$ be a closed degree zero morphism in $\ka^G$. For simplicity write $\widetilde{A}$ for $(A,\lambda_g)$ and similarly $\widetilde{A'}$ for $(A',\lambda'_g)$. By definition the cone $C(\varphi)$ is the object $(\widetilde{A'}[1]\oplus \widetilde{A},q)$, where $q$ is the $(2\times 2)$-matrix with $q_{12}=\varphi$ and 0 otherwise. On the other hand the cone of $\varphi$ considered in $\ka^{pretr}$, which we will denote by $C'(\varphi)$, has a linearisation $\gamma_g$ given by
\[
\begin{pmatrix}
\lambda'_g & 0 \\ 0 & \lambda_g 
\end{pmatrix}
\]   
Using that $\ka$ is pretriangulated we know that $C'(\varphi)$ is homotopy equivalent to an object $D$ of $\ka$, but the linearisation $\gamma_g$ does not necessarily induce a linearisation $\delta_g$ on $D$.
\end{remark}

We can now give our definition of the linearised triangulated category.

%\begin{definition}
%Let $\kt$ be a triangulated category possessing an enhancement $\ka$ and let $G\in \Aut(\kt)$ be a finite group of exact autoequivalences of $\kt$ which lift to quasi-equivalences of $\ka$. The \emph{quotient} of $\kt$ by $G$, denote by $\kt_\ka/G$, is defined to be $H^0((\ka^G)^{pre-tr})$.
%\end{definition}
\begin{definition}
Let $\kt$ be the homotopy category of a pretriangulated DG-category $\ka$ and let $G$ be a group acting on $\ka$ and hence on $\kt$. The \emph{linearisation} of $\kt$ by $G$, denoted by $\kt_\ka^G$, is defined to be $H^0((\ka^G)^{pretr})$.
\end{definition}

Since the enhancement will usually be clear from the context, we will simply write $\kt^G$ instead of $\kt^G_\ka$.

\begin{remark}
Given an additive category $\kt$ and an automorphism $\Phi$ there is an ``orbit category'' $\kt/\Phi$, which has the same objects as $\kt$ and the morphisms between two objects $A$ and $B$ are given by
\[\bigoplus_{n \in \IZ} \Hom_\kt(A,\Phi^nB).\]
If $\kt$ is a triangulated category, then $\kt/\Phi$ is not triangulated in general, but under some fairly strong assumptions it does have a triangulated structure, see \cite{Keller3}. In particular, the mentioned assumptions are not satisfied by $\Db(X)$ for $X$ a smooth projective variety. In general, the orbit category does not seem to capture quotients: Consider a smooth projective variety $X$ with an action by an automorphism such that $X/G$ is smooth and the category $\Coh(X)$, considered for simplicity as an additive category. Then the orbit category is $\pi_*\Coh(X)$ and not $\Coh(X/G)$. 
\end{remark}

Under certain assumptions the construction is functorial. Namely, assume that $G$ acts on two pretriangulated DG-categories $\ka$ and $\ka'$ and that we are given an equivariant DG-functor $F\colon \ka \rightarrow \ka'$ meaning that the following diagram commutes for all $g\in G$:
\[\begin{xy}\xymatrix{ \ka \ar[d]_{g^*} \ar[r]^\Phi & \ka' \ar[d]^{g^*} \\
												\ka \ar[r]^\Phi & \ka'.}\end{xy}\]

Then $\Phi$ induces a functor $\Phi^G\colon\ka^G \rightarrow (\ka')^G$ by sending $(A,\lambda_g)$ to $(\Phi(A),\Phi(\lambda_g))$ and similarly for maps. Using the induced functor on the pretriangulated hulls, we get an exact functor $H^0((\Phi^G)^{pretr})\colon \kt^G\rightarrow (\kt')^G$. 

As a particular example of this consider the situation of a triangulated subcategory. So, suppose we are given a triangulated category $\kt=H^0(\ka)$ with a group action and a triangulated subcategory $\kt'$ such that the action of $G$ restricts to an action on the natural enhancement of $\kt'$ (i.e.\ the objects of the enhancement are those objects $A'$ in $\ka$ such that $H^0(A')\in \kt'$). Then $(\kt')^G$ is a triangulated subcategory of $\kt^G$.

It is, of course, an interesting question whether the construction depends on the choice of the enhancement, that is, if $\kt$ can be written as the homotopy category of two distinct DG-categories $\ka$ and $\kb$, is then $\kt^G_\ka$ equivalent to $\kt^G_\kb$. We will now see that this can be indeed shown, albeit under some assumptions.

\begin{lem}
Let $\ka$ and $\kb$ be two pretriangulated DG-categories and let $\Phi\colon \ka\rightarrow \kb$ be an equivariant quasi-equivalence. Assume that for all $(A,\lambda_g), (A',\lambda'_g) \in \ka^G$ and for all $\varphi \in \Hom_\ka(A,A')$ we have that $g^*(d\varphi)\lambda_g=\lambda'_gd\varphi$ implies that $g^*(\varphi)\lambda_g=\lambda'_g\varphi$ (we call this condition $(\ast)$) and similarly for $\kb$. 
%Furthermore, assume that the categories $H^\bullet(\ka)$ and $H^\bullet(\kb)$ are of finite type, that is, the morphism spaces are finite-dimensional. 
Then $\Phi^G$ is quasi fully faithful. 
\end{lem}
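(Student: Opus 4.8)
The plan is to prove that $\Phi^G\colon \ka^G \to (\kb)^G$ induces a quasi-isomorphism on morphism complexes, i.e.\ that for two linearised objects $(A,\lambda_g)$ and $(A',\lambda'_g)$ the map
\[
\Hom_{\ka^G}\bigl((A,\lambda_g),(A',\lambda'_g)\bigr) \longrightarrow \Hom_{\kb^G}\bigl(\Phi(A,\lambda_g),\Phi(A',\lambda'_g)\bigr)
\]
is a quasi-isomorphism. The key observation is that the morphism complex $\Hom_{\ka^G}((A,\lambda_g),(A',\lambda'_g))$ sits inside the ambient complex $\Hom_\ka(A,A')$ as the subspace of those $\varphi$ satisfying the $G$-invariance condition $g^*(\varphi)\lambda_g = \lambda'_g\varphi$ for all $g$. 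So first I would describe this invariant subcomplex as the equaliser (or kernel) of the difference of two chain maps $\Hom_\ka(A,A')\to\prod_g\Hom_\ka(A,g^*A')$, namely $\varphi\mapsto(g^*(\varphi)\lambda_g)_g$ and $\varphi\mapsto(\lambda'_g\varphi)_g$. This exhibits the invariant morphisms as the fibre of a map of complexes.

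Next I would use the condition $(\ast)$ to identify the cohomology of this invariant subcomplex. The condition $(\ast)$ says precisely that a morphism whose \emph{differential} is $G$-invariant is itself $G$-invariant; combined with the analogous statement one degree up, this should show that taking cohomology commutes with taking $G$-invariants. Concretely, a closed morphism in $\Hom_{\ka^G}$ is a closed $G$-invariant $\varphi$, and condition $(\ast)$ guarantees that any closed $\varphi$ in $\Hom_\ka(A,A')$ that happens to be $G$-invariant up to a coboundary can be adjusted. The upshot I am aiming for is the identity
\[
H^\bullet\bigl(\Hom_{\ka^G}((A,\lambda_g),(A',\lambda'_g))\bigr) \iso H^\bullet\bigl(\Hom_\ka(A,A')\bigr)^G,
\]
the $G$-invariant part of the cohomology, where the $G$-action on $H^\bullet(\Hom_\ka(A,A'))$ is induced by conjugation through the $\lambda_g$. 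This is the step where $(\ast)$ is indispensable: without it the invariants of the complex need not compute the invariants of the cohomology, because a cocycle could be $G$-invariant only after passing to cohomology classes.

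With this reduction in hand, the final step is to compare $\ka$ and $\kb$. Since $\Phi$ is a quasi-equivalence, the map $\Hom_\ka(A,A')\to\Hom_\kb(\Phi A,\Phi A')$ is a quasi-isomorphism, hence induces a $G$-equivariant isomorphism on cohomology $H^\bullet(\Hom_\ka(A,A'))\iso H^\bullet(\Hom_\kb(\Phi A,\Phi A'))$; equivariance follows from $\Phi$ being an equivariant DG-functor, so it intertwines the conjugation actions via $\lambda_g$ and $\Phi(\lambda_g)$. Taking $G$-invariants of an isomorphism of $G$-representations yields an isomorphism, and by the cohomology identification above this is exactly the statement that $\Phi^G$ is quasi fully faithful. (Here the finiteness of $G$, so that invariants are exact and behave well, is used implicitly.)

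The main obstacle I anticipate is the middle step: verifying carefully that $H^\bullet(\Hom_{\ka^G})$ equals $H^\bullet(\Hom_\ka)^G$ using only condition $(\ast)$. One must handle both the cocycle level (a closed invariant morphism represents an invariant class) and the coboundary level (an invariant class has an invariant closed representative), and it is precisely to control the passage between ``$d\varphi$ is invariant'' and ``$\varphi$ is invariant'' that $(\ast)$, applied in the relevant degrees, is needed. Everything else — writing out the invariant subcomplex, checking $G$-equivariance of the comparison map, and invoking exactness of invariants for a finite group — is routine, so I would keep those parts brief and concentrate the argument on the cohomology-of-invariants identification.
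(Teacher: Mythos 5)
Your proposal is correct and takes essentially the same approach as the paper: both use condition $(\ast)$ to identify the cohomology of the invariant Hom-complexes with that of the ambient complexes $\Hom_\ka(A,A')$ and $\Hom_\kb(\Phi(A),\Phi(A'))$, and then transport the isomorphism induced by the quasi-equivalence $\Phi$ (the paper packages this as a diagram chase, with injectivity from $(\ast)$ and a surjectivity argument via quasi-faithfulness, rather than as taking $G$-invariants of an equivariant isomorphism). One small remark: under $(\ast)$ every \emph{closed} morphism is automatically invariant (take $d\varphi=0$ in the hypothesis), so the $G$-action on $H^\bullet(\Hom_\ka(A,A'))$ is trivial, your invariants are everything, and the finiteness of $G$ you invoke at the end is not actually needed.
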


\begin{proof}
Condition $(\ast)$ ensures that $H^i_{\ka^G}((A,\lambda_g),(A',\lambda'_g))$ embeds into $H^i_\ka(A,A')$ for all $i$. The commutativity of the following diagram (for all $i \in \IZ$) 
\[\begin{xy}\xymatrix{H^i_{\ka^G}((A,\lambda_g),(A',\lambda'_g)) \ar[d] \ar[r]^(.4){H^i(\Phi^G)} & H^i_{\kb^G}((\Phi(A),\Phi(\lambda_g)),(\Phi(A'),\Phi(\lambda'_g))) \ar[d]\\
H^i_\ka(A,A') \ar[r]^{H^i(\Phi)}& H^i_\kb(\Phi(A),\Phi(A'))}\end{xy}
\]
combined with the injectivity of the left vertical map and the fact that the lower map is an isomorphism implies the injectivity of $H^i(\Phi^G)$.

Next, take an element $\overline{\psi}$ in $H^i_{\kb^G}((\Phi(A),\Phi(\lambda_g)),(\Phi(A'),\Phi(\lambda'_g)))$. Since by $(\ast)$ the right vertical map is injective this gives an element in $H^i_\kb(\Phi(A),\Phi(A'))$ and hence a unique element $\overline{\varphi}$ in $H^i_\ka(A,A')$. We need to check that in fact $\overline{\varphi}$ is in $H^i_{\ka^G}((A,\lambda_g),(A',\lambda'_g))$. By assumption and quasi-faithfulness of $\Phi$ we have that 
\[g^*(\varphi)\lambda_g-\lambda'_g\varphi=d(\alpha_g)\]
for some $\alpha_g$ (here $\varphi$ is a representative of $\overline{\varphi}$). Differentiating the above equation gives that $d(\varphi)$ commutes with the linearisations and therefore, by $(\ast)$, $\varphi$ does. Hence, $\Phi^G$ is quasi fully faithful. 
\end{proof}

\begin{prop}\label{ind-enh}
In addition to the assumptions of the previous lemma let furthermore $\Psi\colon \kb \rightarrow \ka$ be an adjoint equivariant quasi-equivalence. Then $\kt^G_\ka$ and $\kt^G_\kb$ are equivalent.
\end{prop}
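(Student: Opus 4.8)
The plan is to promote the quasi-fully-faithful functor $\Phi^G$ from the previous lemma to a full quasi-equivalence by exhibiting essential surjectivity on homotopy categories. By the lemma, $\Phi^G\colon \ka^G\to \kb^G$ is already quasi fully faithful, so what remains is to show that every linearised object of $\kb$ is, up to the appropriate equivalence, in the image. Here is where the adjoint $\Psi$ enters: rather than argue directly about the image of $\Phi^G$, I would show that $\Psi$ is itself equivariant and that $\Psi^G$ provides a quasi-inverse to $\Phi^G$ at the level of the linearised categories, so that both functors descend to mutually inverse equivalences on $\kt^G_\ka$ and $\kt^G_\kb$.

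Concretely, I would proceed in the following steps. First, since $\Psi$ is an equivariant quasi-equivalence satisfying the hypotheses symmetric to those for $\Phi$, the lemma applies to $\Psi$ as well, giving that $\Psi^G\colon \kb^G\to \ka^G$ is quasi fully faithful. Second, because $\Phi$ and $\Psi$ are adjoint quasi-equivalences, the unit and counit of the adjunction are quasi-isomorphisms, i.e.\ they become isomorphisms after passing to $H^0$. The key point is to check that these adjunction morphisms are \emph{equivariant}: because $\Phi$ and $\Psi$ commute with the $g^*$ on the nose (as encoded by the commuting squares defining equivariance), the unit $\id_\ka\to \Psi\Phi$ and counit $\Phi\Psi\to \id_\kb$ are compatible with the linearisations $\lambda_g$, hence they define morphisms of linearised objects. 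Third, I would invoke Proposition \ref{linearisations-pretriangulated} to pass from $\ka^G$ and $\kb^G$ to their pretriangulated hulls, so that $\Phi^G$ and $\Psi^G$ induce exact functors $\kt^G_\ka\to \kt^G_\kb$ and back; the equivariant unit and counit then furnish natural isomorphisms $\Psi^G\Phi^G\cong \id$ and $\Phi^G\Psi^G\cong \id$ on $H^0$ of the pretriangulated hulls. Combining the quasi-full-faithfulness from the lemma with this essential surjectivity yields the desired equivalence $\kt^G_\ka\simeq \kt^G_\kb$.

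The main obstacle I anticipate is verifying the equivariance and linearisation-compatibility of the adjunction morphisms precisely enough that they genuinely live in the linearised category rather than merely in $\ka$ and $\kb$. The commuting squares that define equivariance of $\Phi$ and $\Psi$ involve a choice of $2$-isomorphism $\Phi g^*\cong g^*\Phi$, and to conclude that the unit and counit are $G$-invariant morphisms one must track how these coherence data interact with the associativity isomorphisms $c_{g,h}$ of the $G$-action. In other words, the delicate bookkeeping lies in confirming that the naturality of the adjunction against the functors $g^*$ translates into exactly the commuting square defining a morphism of linearised objects; any failure of strict compatibility would only give the statement up to a correction that must be absorbed into the linearisation data. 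Once this compatibility is established, condition $(\ast)$ (already assumed for both $\ka$ and $\kb$) guarantees that the induced maps on the linearised $\Hom$-complexes behave well, and the remaining verifications are formal consequences of the adjunction being a quasi-equivalence.
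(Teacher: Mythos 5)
Your proposal is correct and follows essentially the same route as the paper: lift the adjunction to $\Phi^G$ and $\Psi^G$, apply the lemma symmetrically to get that both are quasi fully faithful, use the (equivariant) unit and counit to conclude mutual quasi-inverseness on homotopy categories, and then pass to pretriangulated hulls to obtain the equivalence $\kt^G_\ka\cong\kt^G_\kb$. The only differences are cosmetic: the paper compresses your careful verification that the adjunction descends to the linearised categories into the single assertion ``since $\Phi$ and $\Psi$ are adjoint, so are $\Phi^G$ and $\Psi^G$,'' and for the last step it invokes the fact that $F^{pretr}$ is a quasi-equivalence whenever $F$ is, rather than Proposition \ref{linearisations-pretriangulated}, which is the more apt reference for that step.
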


\begin{proof}
Since $\Phi$ and $\Psi$ are adjoint, so are $\Phi^G$ and $\Psi^G$. Both these functors are quasi fully faithful and hence define equivalences on the homotopy categories. Now use that if a DG-functor $F$ is a quasi-equivalence, then so is $F^{pretr}$.
\end{proof}

\begin{remark}
One probably should not expect that our construction is independent of the choice of the enhancement in general, since linearisations do use the DG-structure. Of course, it may then be asked what the relation between $\kt^G_\ka$ and $\kt^G_\kb$ is.
\end{remark}
%\begin{proof}
%We know that $\ka^G$ and $\kb^G$ are quasi-equivalent. But then so are $(\ka^G)^{pretr}$ and $(\kb^G)^{pretr}$, see \cite[Prop.\ 2.5]{Drinfeld}.
%\end{proof}

It is difficult to produce new interesting autoequivalences of finite order in the geometric setting. The case of a finite group of automorphisms will be settled in the next section. The next case is the action of the group generated by a line bundle of finite order, which will be partially dealt with in Section \ref{Applications}. Basically, these are the only examples we have at our disposal. One might hope to produce new ones by conjugating the action of one of the above mentioned groups but we will now see that this does not give anything new. 

Let $G$ act on a pretriangulated DG-category $\ka$, let $\Phi$ be a DG-equivalence of $\ka$, consider the action of $G$ given by $g\mapsto \Phi^{-1}\circ g^*\circ \Phi$ and denote the DG-category linearised with respect to this action by $\ka^{\widetilde{G}}$. Then we have the
\begin{prop}
The categories $\ka^{G}$ and $\ka^{\widetilde{G}}$ are DG-equivalent.
\end{prop}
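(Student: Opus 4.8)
The plan is to write down an explicit DG-functor $\Theta\colon \ka^{\widetilde G}\longrightarrow \ka^G$ and show it is a DG-equivalence. Write $g^*$ for the given action and $\widetilde g^*:=\Phi^{-1}\comp g^*\comp \Phi$ for the conjugate one, where $\Phi^{-1}$ denotes a fixed DG quasi-inverse of $\Phi$; the coherence isomorphisms $\widetilde c_{g,h}$ of the conjugate action are the conjugates $\Phi^{-1}(c_{g,h})\Phi$ of those of the original action. On objects I send a linearised object $(B,\mu_g)$ of $\ka^{\widetilde G}$, with $\mu_g\colon B\longrightarrow \widetilde g^*(B)=\Phi^{-1}(g^*(\Phi(B)))$, to $(\Phi(B),\lambda_g)$, where $\lambda_g$ is the composite of $\Phi(\mu_g)\colon \Phi(B)\longrightarrow \Phi\Phi^{-1}(g^*(\Phi(B)))$ with the canonical isomorphism $\Phi\Phi^{-1}\isomor \id$, so that $\lambda_g\colon \Phi(B)\longrightarrow g^*(\Phi(B))$. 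On morphisms I send $\varphi$ to $\Phi(\varphi)$.

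First I would check that $(\Phi(B),\lambda_g)$ is a genuine linearised object, i.e.\ that $\lambda_1=\id$ and the cocycle relation $\lambda_{gh}=h^*(\lambda_g)\lambda_h$ holds. Both follow by applying $\Phi$ to the corresponding identities $\mu_1=\id$ and $\mu_{gh}=\widetilde h^*(\mu_g)\mu_h$ for $(B,\mu_g)$ and then rewriting $\Phi\widetilde h^*=\Phi\Phi^{-1}h^*\Phi$, which is canonically isomorphic to $h^*\Phi$, by means of the structure isomorphism; this is exactly the point at which the conjugated coherence data $\widetilde c_{g,h}$ must be matched against $c_{g,h}$, and it is the only genuinely delicate bookkeeping in the argument. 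Similarly, if $\varphi\colon (B,\mu_g)\longrightarrow (B',\mu'_g)$ is $\widetilde G$-invariant, then applying $\Phi$ to $\widetilde g^*(\varphi)\mu_g=\mu'_g\varphi$ and using $\Phi\widetilde g^*\iso g^*\Phi$ shows that $\Phi(\varphi)$ is $G$-invariant, so $\Theta$ is well defined on morphisms. Since $\Phi$ is a DG-functor it commutes with the differentials on the Hom-complexes, and because the Hom-complex of $\ka^{\widetilde G}$ (resp.\ $\ka^G$) is by construction the invariant subcomplex of $\Hom_\ka(B,B')$ (resp.\ $\Hom_\ka(\Phi B,\Phi B')$), $\Theta$ is a DG-functor.

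It remains to see that $\Theta$ is a DG-equivalence in the sense of Definition \ref{quasi}. For full faithfulness, note that $\Phi$ induces an isomorphism of complexes $\Hom_\ka(B,B')\isomor \Hom_\ka(\Phi B,\Phi B')$, and the computation just made, carried out in every degree, shows that it identifies the $\widetilde G$-invariant subcomplex with the $G$-invariant one; hence $\Theta$ is an isomorphism on Hom-complexes. For essential surjectivity up to closed degree-zero isomorphism, take $(A,\lambda_g)\in \ka^G$. As $\Phi$ is a DG-equivalence there is a $B\in\ka$ and a closed degree-$0$ isomorphism $A\isomor \Phi(B)$; transporting $\lambda_g$ along it yields a linearisation $\widehat\lambda_g$ of $\Phi(B)$ with $(A,\lambda_g)\iso(\Phi(B),\widehat\lambda_g)$. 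Since each $\widehat\lambda_g\colon \Phi(B)\longrightarrow g^*(\Phi(B))\iso \Phi(\widetilde g^*(B))$ lies in the image of the fully faithful $\Phi$, it equals $\Phi(\mu_g)$ for a unique $\mu_g$, and full faithfulness lets one reflect the cocycle relation back to $(B,\mu_g)$; by construction $\Theta(B,\mu_g)=(\Phi(B),\widehat\lambda_g)$. Thus $\Theta$ is a DG-equivalence. The main obstacle is purely the coherent handling of the associativity isomorphisms $c_{g,h}$ and $\widetilde c_{g,h}$ in the verification of the cocycle relation; everything else is a formal consequence of $\Phi$ being a DG-equivalence.
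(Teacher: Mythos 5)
Your proof is correct and takes essentially the same approach as the paper: the paper's one-line proof simply defines the functor in the opposite direction, $(A,\lambda_g)\mapsto (\Phi^{-1}(A),\Phi^{-1}(\lambda_g))$ from $\ka^G$ to $\ka^{\widetilde G}$, and asserts it is ``easily seen'' to be a DG-equivalence, i.e.\ it transports linearisations along the conjugating equivalence exactly as your $\Theta$ does. Your write-up just supplies the coherence, full-faithfulness and essential-surjectivity details that the paper leaves to the reader.
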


\begin{proof}
The functor $F$ sending an object $(A,\lambda_g)$ in $\ka^G$ to $(\Phi^{-1}(A),\Phi^{-1}(\lambda_g))$ is easily seen to be a DG-equivalence. 
\end{proof}
%Thus, we would like to consider conjugation by a quasi-equivalence. Of course, this is not a priori possible and we have to introduce several assumptions. Let $\Phi$ be a quasi-equivalence and assume that it has an adjoint $\Psi$ (this implies that $H^0(\Psi)\cong H^0(\Phi)^{-1}$). This is satisfied in the geometric situation if $\Phi$ is a lift by \cite[Lem.\ 7.5]{BLL}. Under these assumptions we can now state the

%\begin{prop}
%Let an action of the group $G$ on a triangulated category $\kt$ and its extension to an enhancement $\ka$ be given and consider the action conjugated by a quasi-equivalence $\Phi$ which has an adjoint $\Psi$. Denoting the category linearised with respect to the latter action by $\ka^{\widetilde{G}}$, we have $H^0(\ka^G)\cong H^0(\ka^{\widetilde{G}})$.
%\end{prop} 

%\begin{proof}
%We will first define a DG-functor from $\ka^G$ to $\ka^{\widetilde{G}}$. Applying $\Phi$ to $(A,\lambda_g) \in \ka^G$, we get a pair $(\Phi(A),\Phi(\lambda_g))$. Now, a linearisation with respect to $\widetilde{G}$ is a morphism $\beta_g\colon A'\rightarrow \widetilde{g}^*(A')=\Phi g\Psi (A')$ for any $g \in G$ satisfying the linearisation relation in the homotopy category. Using that $\Psi$ and $\Phi$ are adjoint we see that $\Phi(\lambda_g)$ does exactly that for $A'=\Phi(A)$. 

%The equivalence of the homotopy categories follows along the lines of the proof of Lemma \ref{indep-enhancement}. 
%\end{proof} 

\section{The case of automorphisms}\label{classical}

Of course, one has to check that the above procedure produces the derived category of $[X/G]$ if $G$ is a finite group of automorphisms of $X$. Denote the cardinality of $G$ by $n$. There are canonical isomorphisms $(gh)^*\cong h^*g^*$ and these will be used to define the action of $G$.

We first recall some useful facts. There is an equivalence $\(Q)Coh([X/G])\cong \(Q)Coh^G(X)$, where the latter is the category of linearised sheaves. The quotient morphism $\pi\colon X \rightarrow [X/G]$ is flat, hence $\pi^*$ is exact. 
%The exact pushforward functor $\pi_*$ corresponds to the \emph{inflation functor} $\kf \rightarrow \oplus_g g^*(\kf)$, where one uses the canonical linearisation on the latter object. The \emph{forgetful functor} sending $(\kf,\lambda_g)$ to $\kf$ corresponds to $\pi^*$. 
Using the adjunction
\[\Hom(\pi^*(-),-)\cong \Hom(-,\pi_*(-))\]
and the exactness of $\pi^*$ we see that the pushforward of an injective sheaf on $X$ is an injective sheaf on $[X/G]$.

Now consider $\kt=\Db(X)$ as the homotopy category of $\Db_{DG}(X)$. Then an object of $\ka^G$ is by definition a complex as above together with chain isomorphisms $\lambda_g$ satisfying the linearisation relation. Since $\ka$ is strongly pretriangulated, so is $\ka^G$ (see Proposition \ref{linearisations-pretriangulated}).

We need the following 

\begin{lem}
Let $\kf'=(\kf,\lambda_g)$ be an injective sheaf on $[X/G]$. Then $\kf=\pi^*(\kf')$ is an injective sheaf on $X$.
\end{lem}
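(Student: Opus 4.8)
The plan is to prove the slightly more flexible statement that the exact functor $\pi^*$ carries injective objects of $\QCoh([X/G])$ to injective objects of $\QCoh(X)$; since the underlying sheaf on $X$ of the linearised object $\kf'$ is exactly $\pi^*\kf'=\kf$, this is what the lemma asserts. (I work in $\QCoh$, where enough injectives exist and where the injective sheaves of the enhancement $\Db_{DG}(X)$ live.) The morphism $\pi$ is finite, because $G$ is finite, and flat, and in fact \'etale: the fibre product $X\times_{[X/G]}X$ is the action groupoid $G\times X$, whose two structure maps to $X$ are \'etale. This \'etaleness is the only input needed beyond the facts already recorded above, namely the adjunction $\pi^*\dashv\pi_*$ and the exactness of $\pi^*$.

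First I would exploit that a finite morphism is affine, so that $\pi_*$ is itself exact, and proper, so that by coherent duality for finite morphisms $\pi_*$ admits a right adjoint $\pi^!$ characterised by $\R\Hom_X(-,\pi^!(-))\cong \R\Hom_{[X/G]}(\pi_*(-),-)$. Being the right adjoint of an \emph{exact} functor, $\pi^!$ sends injectives to injectives, exactly in the way that the already established fact ``$\pi_*$ of an injective is injective'' uses that $\pi^*$ is exact. Concretely, for any coherent $\ke$ on $X$ and any injective $\kf'$ on $[X/G]$ the derived adjunction together with $\R\pi_*=\pi_*$ gives $\Ext^i_X(\ke,\pi^!\kf')\cong \Ext^i_{[X/G]}(\pi_*\ke,\kf')$, and the right-hand side vanishes for $i>0$; hence $\pi^!\kf'$ is injective.

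It then remains to identify $\pi^!$ with $\pi^*$. For a finite flat morphism one has $\pi^!\kf'\cong \pi^*\kf'\otimes\omega_\pi$ with $\omega_\pi$ the relative dualizing sheaf, and for the \'etale $\pi$ at hand $\omega_\pi\cong\ko_X$, so $\pi^!\cong\pi^*$ and $\pi^*\kf'$ is injective. Equivalently, and more in the spirit of the preceding discussion, one can argue through the algebra $\ka:=\pi_*\ko_X$: since $\pi$ is affine, $\QCoh(X)$ is the category of $\ka$-modules and $\pi_*\pi^*\kf'\cong \kf'\otimes_{\ko_{[X/G]}}\ka$ by the projection formula, while the coinduced module $\sHom_{\ko_{[X/G]}}(\ka,\kf')$ is an injective $\ka$-module, being the value at the injective $\kf'$ of the right adjoint to the exact restriction-of-scalars functor. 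The two objects agree once $\ka\cong\ka\dual$ as $\ka$-modules, which is once more the \'etale condition, i.e.\ nondegeneracy of the trace form.

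The main obstacle is precisely this last identification $\pi^!\cong\pi^*$, that is, the triviality of $\omega_\pi$ or equivalently the self-duality $\ka\cong\ka\dual$. This is where it matters that $[X/G]$ is the \emph{smooth} quotient stack and that $|G|$ is prime to the characteristic, so that $\pi$ is genuinely \'etale; for a ramified finite flat quotient one would only conclude that $\pi^*\kf'\otimes\omega_\pi$, rather than $\pi^*\kf'$ itself, is injective. Everything else is a formal consequence of the exactness of $\pi_*$ and of the adjunctions.
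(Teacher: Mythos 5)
Your proof is correct, and while it rests on the same abstract principle as the paper's argument --- a functor with an exact adjoint on the appropriate side preserves injectives --- it runs the duality in the opposite direction. The paper uses Serre duality on $X$ and on $Y=[X/G]$ together with the adjunction between $\pi^*$ and $\pi_*$ to obtain $\Hom_X(-,\pi^*\kf')\cong\Hom_Y(S_Y^{-1}\pi_*S_X(-),\kf')$; since $S_Y^{-1}\pi_*S_X$ takes sheaves to sheaves (the shifts cancel) and is exact ($\pi_*$ and line-bundle twists are exact), this exhibits an exact \emph{left} adjoint of $\pi^*$, and injectivity of $\pi^*\kf'$ follows with no identification of the relative dualizing sheaf needed. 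You instead exhibit $\pi^*$ as the \emph{right} adjoint $\pi^!$ of the exact functor $\pi_*$, which requires the extra input $\omega_\pi\cong\ko_X$, i.e.\ \'etaleness of the $G$-torsor. By Grothendieck duality the two statements are interchangeable, and the Remark the paper places right after the lemma --- where $\pi^*\omega_Y\cong\omega_X$ and the projection formula give $S_Y^{-1}\pi_*S_X\cong\pi_*$ --- brings the two arguments into literal agreement (that Remark also contains a second, elementary proof by a splitting argument using that $\pi_*\pi^*\kf\cong\kf\otimes\kl$ with $\kl$ locally free). Your formulation has one technical advantage: it works throughout in $\QCoh$, via coinduction and restriction of scalars over the finite flat algebra $\pi_*\ko_X$, where the adjunctions hold for arbitrary quasi-coherent sheaves; the paper's computation applies Serre duality with the injective (hence generally non-coherent) sheaf in one slot, which strictly speaking requires the finite-dimensionality available only for coherent arguments.

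One inaccuracy in your closing discussion: \'etaleness is less essential than you claim. For a ramified finite flat cover with Gorenstein (e.g.\ smooth) source and target, $\omega_\pi$ is still a line bundle, and tensoring with a line bundle is an autoequivalence of $\QCoh(X)$, hence preserves injectivity; so injectivity of $\pi^!\kf'\cong\pi^*\kf'\otimes\omega_\pi$ would still yield injectivity of $\pi^*\kf'$ itself. Indeed, the paper's main proof makes no use of \'etaleness. This does not affect the validity of your proof of the lemma as stated, only the claim about where the hypothesis matters.
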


\begin{proof}
Denote $[X/G]$ by $Y$ and the Serre functor by $S_X$ resp.\ $S_Y$. The result follows from the following computation
\begin{align*}
\Hom_X(-,\pi^*(\kf'))&\cong\Hom_X(\pi^*(\kf'),S_X(-))^\vee \cong \Hom_Y(\kf',\pi_*S_X(-))^\vee\cong\\
&\cong\Hom_Y(\pi_*S_X(-),S_Y(\kf'))\cong \Hom_Y(S^{-1}_Y\pi_*S_X(-),\kf').
\end{align*}
Note that the functor $S_Y^{-1}\pi_* S_X$ takes sheaves to sheaves, since the shifts cancel out. Furthermore, $\kf'$ is injective by assumption, $\pi_*$ is exact and tensoring with line bundles is also exact, hence we conclude that $\Hom(-,\pi^*(\kf'))$ is an exact functor and therefore $\kf=\pi^*(\kf,\lambda_g)$ is injective.
\end{proof}

\begin{remark}
In our situation one has that $\pi^*\omega_Y\cong \omega_X$ by the ramification formula $\omega_X\cong \pi^*\omega_Y\otimes \ko(R)$. Hence the projection formula gives that 
\[S_Y^{-1}\pi_* S_X(\kf)\cong \pi_*(\kf\otimes \omega_X)\otimes \omega_Y^{-1}\cong \pi_*(\kf).\]
The forgetful functor $\Forg$ corresponds to $\pi^*$ and the inflation functor $\Inf$ corresponds to $\pi_*$. Hence, $\Forg\circ\Inf\cong \oplus_{g\in G} g^*\cong \pi^*\pi_*$.

Using this, one can give another proof of the above lemma:
Consider the following diagram in $\QCoh(X)$:
\[\begin{xy}\xymatrix{ & & \pi^*(\kf) \\
												0 \ar[r] & \kp \ar[r]^\psi \ar[ur]^\phi & \kq.}\end{xy}\]
We need to construct a map $\theta\colon \kq \rightarrow \pi^*(\kf)$ making the diagram commutative. Apply $\pi_*$ to the diagram. Since $\pi_*\pi^*(\kf)=\kf\otimes \pi_*\ko_X$ and $\pi_*\ko_X=\kl$ is locally free, $\pi_*\pi^*(\kf)$ is an injective sheaf again and therefore we get a diagram
\[\begin{xy}\xymatrix{& & \kf\otimes \kl \\
												0 \ar[r] & \pi_*\kp \ar[r]^{\pi_*(\psi)} \ar[ur]^{\pi_*(\phi)} & \pi_*\kq \ar[u]_\alpha.}\end{xy}\]
Applying $\pi^*$ and using that $\pi^*\pi_*=\oplus_g g^*$ we get
\[\begin{xy}\xymatrix{ & & (\pi^*(\kf))^{\oplus n} \\
												0 \ar[r] & \oplus_g g^*(\kp) \ar[r]^{\oplus_g g^*(\psi)} \ar[ur]^{\oplus_g g^*(\phi)} & \oplus_g g^*(\kq) \ar[u]_{\pi^*(\alpha)}.}\end{xy}\]
Denoting the inclusion of $\kq$ in $\oplus_g g^*(\kq)$ by $\iota$ and the first projection from $(\pi^*(\kf))^{\oplus n}$ to $\pi^*(\kf)$ by $p_1$, the wanted morphism $\theta$ is then $p_1 \circ \pi^*(\alpha)\circ \iota$.
\end{remark}

Let $\Db_{DG}([X/G])$ be the enhancement of $\Db([X/G])$ by injective sheaves. Using the above lemma we can construct a functor from $\Db_{DG}([X/G])$ to $\ka^G$: If a complex of injective linearised sheaves $(\kf^i,\lambda^i_g)$ is given, then we can send it to the complex having the $\kf^i$ as terms and the linearisation of this complex is given termwise by the $\lambda^i_g$. A map in $\kb$ is simply sent to itself. Clearly, this is fully faithful, so in particular we get: 
%Consider the following diagram in $\QCoh(X)$:
%\[\begin{xy}\xymatrix{ & & \pi^*(\kf) \\
												%0 \ar[r] & \kp \ar[r]^\psi \ar[ur]^\phi & \kq.}\end{xy}\]
%We need to construct a map $\theta\colon \kq \rightarrow \kf$ making the diagram commutative. Apply $\pi_*$ to the diagram. Since $\pi_*\pi^*(\kf)=\kf\otimes \pi_*\ko_X$ and $\pi_*\ko_X=\kl$ is locally free, $\pi_*\pi^*(\kf)$ is an injective sheaf again and therefore we get a diagram
%\[\begin{xy}\xymatrix{& & \kf\otimes \kl \\
											%	0 \ar[r] & \pi_*\kp \ar[r]^{\pi_*(\psi)} \ar[ur]^{\pi_*(\phi)} & \pi_*\kq \ar[u]_\alpha.}\end{xy}\]
%Applying $\pi^*$ and using that $\pi^*\pi_*=\oplus_g g^*$ we get
%\[\begin{xy}\xymatrix{ & & (\pi^*(\kf))^{\oplus n} \\
												%0 \ar[r] & \oplus_g g^*(\kp) \ar[r]^{\oplus_g g^*(\psi)} \ar[ur]^{\oplus_g g^*(\phi)} & \oplus_g g^*(\kq) %\ar[u]_{\pi^*(\alpha)}.}\end{xy}\]
%Denoting the inclusion of $\kq$ in $\oplus_g g^*(\kq)$ by $\iota$ and the first projection from $(\pi^*(\kf))^{\oplus n}$ to $\pi^*(\kf)$ by $p_1$, the wanted morphism $\theta$ is then $p_1 \circ \pi^*(\alpha)\circ \iota$. 

\begin{prop}\label{classical-picture}
There exists a DG-equivalence $\Phi\colon \Db_{DG}([X/G]) \rightarrow \ka^G$. Hence, $\Db(X)^G\cong \Db[X/G]$.\qqed
\end{prop}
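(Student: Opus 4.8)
The plan is to verify directly that the functor $\Phi$ constructed above is a DG-equivalence in the sense of Definition \ref{quasi}, that is, that it induces an isomorphism on all $\Hom$-complexes and that every object of $\ka^G$ is isomorphic, by a closed degree-zero isomorphism, to one in its image. Once this is established, $\Phi$ is in particular a quasi-equivalence, so $H^0(\Phi)$ is an equivalence of triangulated categories. Since $\Db_{DG}([X/G])$ enhances $\Db([X/G])$ and, by Proposition \ref{linearisations-pretriangulated}, $\ka^G$ is strongly pretriangulated so that $\Db(X)^G = H^0((\ka^G)^{pretr}) \cong H^0(\ka^G)$, this yields the asserted equivalence $\Db([X/G]) \cong \Db(X)^G$.

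For full faithfulness I would argue termwise. Under the equivalence $\QCoh([X/G]) \cong \QCoh^G(X)$ the forgetful functor $\Forg$ is identified with $\pi^*$, and a morphism in $\QCoh([X/G])$ between two sheaves is precisely a $G$-invariant morphism of the underlying sheaves on $X$. Hence for complexes $B^\bullet, C^\bullet$ of injectives on $[X/G]$ the degree-$n$ part of $\Hom_{\Db_{DG}([X/G])}(B^\bullet,C^\bullet)$, namely the families of sheaf maps $B^p \to C^{p+n}$, maps bijectively onto the degree-$n$ part of $\Hom_{\ka^G}(\Phi B^\bullet, \Phi C^\bullet)$, which by definition consists of the $G$-invariant families $\pi^*B^p \to \pi^* C^{p+n}$. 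As the differentials on both $\Hom$-complexes are given by the same formula, this bijection is an isomorphism of complexes, so $\Phi$ is fully faithful in the DG sense; this is the content of the ``clearly'' above.

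The essential point, and the one I expect to be the main obstacle, is essential surjectivity. Let $(A^\bullet,\lambda_g)$ be an object of $\ka^G$. Since each $\lambda_g\colon A^\bullet \to g^*(A^\bullet)$ is a closed degree-zero isomorphism, it is a chain isomorphism and hence given termwise by isomorphisms $\lambda_g^i\colon A^i \to g^*(A^i)$ satisfying the cocycle condition degreewise, so each pair $(A^i,\lambda_g^i)$ is a linearised sheaf on $X$ whose underlying sheaf $A^i$ is injective. I claim such a linearised sheaf is injective as an object of $\QCoh^G(X)$. Indeed, for any $(E,\mu)$ one has $\Hom_{\QCoh^G(X)}((E,\mu),(A^i,\lambda_g^i)) = \Hom_{\QCoh(X)}(E,A^i)^G$; since $A^i$ is injective the functor $\Hom_{\QCoh(X)}(-,A^i)$ is exact, and as $|G|$ is invertible (we work over $\IC$) taking $G$-invariants is exact as well, so $(A^i,\lambda_g^i)$ is injective. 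Under $\QCoh^G(X) \cong \QCoh([X/G])$ it corresponds to an injective sheaf $B^i$ on $[X/G]$.

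Finally I would assemble these into a complex: the differentials $A^i \to A^{i+1}$ commute with the $\lambda_g^i$, again because $\lambda_g$ is a chain map, hence are $G$-invariant and descend to maps $B^i \to B^{i+1}$, giving a complex $B^\bullet$ of injectives on $[X/G]$. Because $\pi^* = \Forg$ is exact and faithful, the cohomology sheaves of $B^\bullet$ are the $G$-equivariant sheaves underlying the $H^i(A^\bullet)$; as the latter are bounded and coherent, $B^\bullet$ lies in $\Db_{DG}([X/G])$. By construction $\Phi(B^\bullet)$ has underlying complex $A^\bullet$ with linearisation $\lambda_g$, so $\Phi(B^\bullet) \cong (A^\bullet,\lambda_g)$. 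This proves essential surjectivity, and with it that $\Phi$ is a DG-equivalence. The only input beyond the preceding lemma is the descent of injectivity from $X$ to $[X/G]$, which is exactly where the hypothesis that $G$ is finite with $|G|$ invertible enters.
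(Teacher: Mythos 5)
Your proof is correct and follows the same route as the paper: the same termwise functor $\Phi$ under the identification $\QCoh([X/G])\cong\QCoh^G(X)$, full faithfulness by identifying both Hom-complexes with $G$-invariant families of sheaf maps, and the passage from $H^0(\ka^G)$ to $\Db(X)^G=H^0((\ka^G)^{pretr})$ via Proposition \ref{linearisations-pretriangulated}. The interesting point is that the two arguments distribute the work in exactly complementary ways. The paper's entire written effort goes into the lemma preceding the proposition, namely that $\pi^*$ of an injective sheaf on $[X/G]$ is injective on $X$ (proved via Serre functors, by exhibiting the exact functor $S_Y^{-1}\pi_*S_X\cong\pi_*$ as a left adjoint of $\pi^*$); this is precisely what makes $\Phi$ land in $\ka^G$ at all, and it is the one point you take on faith when you refer to ``the functor constructed above'' --- note that your full-faithfulness step silently uses it, since without it $\Hom_{\ka^G}(\Phi B^\bullet,\Phi C^\bullet)$ is not even defined, and it is not formal (it needs $\pi_*$ to be a \emph{left} adjoint of $\pi^*$, which holds here because $\pi$ is a $G$-torsor). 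Conversely, the paper dismisses full faithfulness with ``clearly'' and never spells out essential surjectivity, which you rightly identify as the main issue and prove in full: your lemma that a linearised sheaf $(A^i,\lambda^i_g)$ with $A^i$ injective on $X$ is injective as an object of $\QCoh^G(X)\cong\QCoh([X/G])$ (exactness of $\Hom_{\QCoh(X)}(-,A^i)$ composed with the exact functor $(-)^G$, $|G|$ being invertible) is exactly the converse of the paper's lemma and is the ingredient the paper leaves implicit; alternatively it follows from the recalled fact that $\pi_*$ preserves injectives, since by averaging $(A^i,\lambda^i_g)$ is a direct summand of $\Inf(A^i)\cong\pi_*A^i$. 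So, granting the paper's preceding lemma (which the statement's formulation entitles you to), your argument is complete, and on the essential-surjectivity side it is more detailed than the paper's own; had you been writing a fully self-contained proof, the injectivity of $\pi^*B^p$ for $B^p$ injective on $[X/G]$ is the one step you would still need to supply.
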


The previous discussion can be used to deduce the following

\begin{cor}\label{FM-quotient}
Let $Y$ be a Fourier--Mukai partner of $X$, let $G$ be a finite group of automorphisms acting on $Y$ and fix an equivalence $\begin{xy}\xymatrix{F\colon \Db(X) \ar[r]^(.5)\sim &  \Db(Y).}\end{xy}$ If we let the group $G$ acts on $\Db(X)$ by $F^{-1}\circ g^* \circ F$ for any $g \in G$, then $\Db(X)^G$ is equivalent to $\Db([Y/G])$.
\end{cor}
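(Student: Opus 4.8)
The plan is to deduce the statement directly from Proposition \ref{classical-picture} by choosing the enhancement cleverly and transporting the action through $F$. The key observation I would use is that the linearised category $\kt^G_\ka = H^0((\ka^G)^{pretr})$ depends only on the DG-category $\ka$ together with its $G$-action, and not on the particular equivalence $H^0(\ka)\cong\kt$ through which we regard $\ka$ as an enhancement of $\kt$. This is exactly the freedom behind the phrase ``choosing a fairly natural $\ka$'', and I would exploit it by taking as enhancement of $\Db(X)$ the DG-category $\kb:=\Db_{DG}(Y)$, viewed as an enhancement via the composite $H^0(\kb)\cong\Db(Y)\xrightarrow{F^{-1}}\Db(X)$.

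First I would check that, under this identification, the action $g\mapsto F^{-1}g^*F$ on $\Db(X)$ corresponds to the genuine automorphism action $g\mapsto g^*$ on $\Db(Y)=H^0(\kb)$: conjugating the former by $F^{-1}$ cancels the outer $F^{-1}$ and $F$ and returns $g^*$. Since $g^*$ is one of the standard autoequivalences that lift to DG-equivalences of $\Db_{DG}(Y)$, and the canonical isomorphisms $(gh)^*\cong h^*g^*$ recalled at the start of this section supply the coherence data, this action comes from an honest $G$-action on $\kb$. By definition we then have $\Db(X)^G_\kb = H^0((\kb^G)^{pretr}) = \Db(Y)^G_{\Db_{DG}(Y)}$, and Proposition \ref{classical-picture} applied to $Y$ identifies the right-hand side with $\Db([Y/G])$. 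This already establishes the Corollary for this choice of enhancement.

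The only genuine difficulty, and the point I would treat most carefully, is that this argument produces $\Db(X)^G$ for the transported enhancement $\kb=\Db_{DG}(Y)$ rather than for the standard injective enhancement $\Db_{DG}(X)$. To compare the two one must lift $F$ to the DG level: by Orlov's theorem $F$ is of Fourier--Mukai type, with a kernel $\kp\in\Db(X\times Y)$, and hence lifts to a DG-functor $\Phi\colon \Db_{DG}(X)\rightarrow \Db_{DG}(Y)$ which, being an equivalence on $H^0$, is a quasi-equivalence intertwining the two $G$-actions. The obstacle is precisely the one flagged in Section 2: a general Fourier--Mukai equivalence is not known to lift to a strict DG-equivalence, so one cannot simply conjugate by $\Phi^{-1}$ as in the unlabelled conjugation proposition. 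I would bridge this gap by verifying condition $(\ast)$ for the objects at hand and applying Proposition \ref{ind-enh} to an adjoint quasi-inverse $\Psi$ of $\Phi$, obtaining $\Db(X)^G_{\Db_{DG}(X)}\cong\Db(X)^G_\kb$ and hence independence of the conclusion from the chosen enhancement. The remaining verifications — that the transport of linearisation data along $\Phi$ and $\Psi$ is compatible, and that $(\ast)$ holds in this setting — are routine once this framework is in place.
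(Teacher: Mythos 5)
Your first two paragraphs are precisely the paper's proof: the paper's entire argument reads ``If we write $\Db(X)$ as the homotopy category of $\Db_{DG}(Y)$, the result follows immediately,'' i.e.\ take $\kb=\Db_{DG}(Y)$ as the enhancement of $\Db(X)$ via $F^{-1}$, observe that the conjugated action transports to the genuine automorphism action of $G$ on $Y$, and apply Proposition \ref{classical-picture}. Where you diverge is your third paragraph, and there you have mislocated the burden of proof. The comparison with the standard injective enhancement $\Db_{DG}(X)$ is \emph{not} needed for the corollary as stated: the paper says explicitly in the introduction that its results ``have to be read as involving specific choices of $\ka$,'' so the notation $\Db(X)^G$ here means the linearisation with respect to the transported enhancement $\kb$, and your first two paragraphs already finish the proof. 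Conversely, your claim that the missing verifications are ``routine'' is not justified and should be dropped: checking condition $(\ast)$ for these enhancements and producing an \emph{adjoint, equivariant} quasi-equivalence from the Fourier--Mukai lift $\Phi$ (which a priori intertwines the two $G$-actions only up to homotopy, not strictly, so Proposition \ref{ind-enh} does not directly apply) is exactly the enhancement-independence problem the paper flags as open, with Proposition \ref{ind-enh} offered only as a partial result. Keep paragraphs one and two as the proof; paragraph three is an interesting open question, not a routine appendix.
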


\begin{proof}
If we write $\Db(X)$ as the homotopy category of $\Db_{DG}(Y)$, the result follows immediately.
\end{proof}

Let us now consider the following situation. Let $G$ act on $X$ without fixed points and consider the quotient map $\pi\colon X\rightarrow X/G=:Y$. Take the structure sheaf $\ko_Z$ of a subvariety $Z\subset Y$ and consider the full triangulated subcategory $\kt$ generated by $\ko_Z$ in $\Db(Y)$. Pulling $\ko_Z$ up to $X$ gives the direct sum of $\ko_{Z_i}$, where $\cup Z_i$ is the preimage of $Z$ (for example, $Z$ could be a rational curve on an Enriques surface and then we get two rational curves $Z_1$ and $Z_2$ on the covering K3 surface). Consider the triangulated subcategory of $\Db(X)$ generated by the sheaves $\ko_{Z_i}$ and denote it by $\kt'$. Then we have 

\begin{prop}
There exists an equivalence $(\kt')^G\cong \kt$. 
\end{prop}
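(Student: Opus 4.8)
The plan is to deduce the statement by restricting the global equivalence of Proposition~\ref{classical-picture} to the subcategories at hand. Since $G$ acts without fixed points, the quotient stack $[X/G]$ is the smooth variety $Y=X/G$, so Proposition~\ref{classical-picture} supplies an equivalence $\Psi\colon\Db(Y)\cong\Db(X)^G$ under which, as recorded in Section~\ref{classical}, the forgetful functor corresponds to $\pi^*$ and the inflation functor to $\pi_*$; in particular $\Forg\circ\Psi\cong\pi^*$ and $\Inf\cong\Psi\circ\pi_*$.

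First I would verify that the action restricts to $\kt'$. The preimage $\pi^{-1}(Z)=\bigcup_iZ_i$ is $G$-invariant, so every $g\in G$ permutes its components and hence $g^*\ko_{Z_i}\cong\ko_{Z_{\sigma(i)}}$ for a suitable permutation $\sigma$. Thus $\kt'=\langle\ko_{Z_i}\rangle$ is preserved, the action restricts to its natural enhancement, and by the discussion of triangulated subcategories in Section~3 the category $(\kt')^G$ is a triangulated subcategory of $\Db(X)^G$. Being the restriction of an equivalence, $\Psi$ is automatically fully faithful on $\kt$, so it remains only to match its essential image with $(\kt')^G$.

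For the inclusion $\Psi(\kt)\subseteq(\kt')^G$, note that $\pi^*\ko_Z=\oplus_i\ko_{Z_i}$ carries its canonical linearisation, so $\Forg\circ\Psi\cong\pi^*$ gives $\Psi(\ko_Z)\cong(\oplus_i\ko_{Z_i},\mathrm{can})$, an object whose underlying term lies in $\kt'$; as $(\kt')^G$ is triangulated and thick this yields $\Psi(\kt)\subseteq(\kt')^G$. For the reverse inclusion I would use the averaging argument available because $n=|G|$ is invertible: every $(A,\lambda_g)\in(\kt')^G$ is a direct summand of $\Inf(A)$, the retraction being $\tfrac{1}{n}\sum_g$ of the morphisms assembled from the $\lambda_g$. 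Since $\Inf(A)\cong\Psi(\pi_*A)$ and $\pi_*A$ lies in the thick subcategory generated by the $\pi_*\ko_{Z_i}$, it suffices to know that $\pi_*\ko_{Z_i}\in\kt$; granting this, $\pi_*A\in\kt$, whence $\Inf(A)\in\Psi(\kt)$ and its summand $(A,\lambda_g)\in\Psi(\kt)$, so $(\kt')^G\subseteq\Psi(\kt)$.

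The main obstacle is precisely the input $\pi_*\ko_{Z_i}\cong\ko_Z$, which is immediate from the projection-formula identities of Section~\ref{classical} as soon as each component maps isomorphically onto $Z$. This is the situation of the Enriques--K3 example and, more generally, holds whenever the étale covering $\pi^{-1}(Z)\longrightarrow Z$ splits over $Z$ (for instance when $Z$ is simply connected). If instead some $Z_i\longrightarrow Z$ had degree $>1$, then $\pi_*\ko_{Z_i}$ would be a higher-rank locally free sheaf on $Z$ that need not lie in $\langle\ko_Z\rangle$, and the argument would instead identify $(\kt')^G$ with the a priori larger subcategory $\{B\in\Db(Y):\pi^*B\in\kt'\}$. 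The proposition is therefore cleanest under the (implicit) hypothesis that the $Z_i$ map isomorphically to $Z$, under which the two inclusions above close up to the desired equivalence.
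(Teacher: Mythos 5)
Your proof is correct and, on the crucial half of the argument, takes a genuinely different route from the paper's. The first inclusion is the same in both: $\pi^*\ko_Z\cong\oplus_i\ko_{Z_i}$ carries the canonical linearisation, so $\ko_Z$ lands in $(\kt')^G$, and since $(\kt')^G$ is a triangulated (and, with your conventions, thick) subcategory of $\Db(X)^G$, so does all of $\kt$. For the reverse inclusion the paper argues by support: objects of $\kt'$ are iterated extensions of shifts of the $\ko_{Z_i}$, hence supported on $\cup_i Z_i$, and it concludes that the linearisable ones are ``generated by $\oplus_i\ko_{Z_i}$''. That argument is terse, and support alone does not place a descended object in $\langle\ko_Z\rangle$ --- not every complex supported on $Z$ lies in the subcategory generated by $\ko_Z$, as the identification of such categories with $\Perf(B)$ in Section 5 already suggests --- so something like your argument is needed to close it. Your averaging step does exactly that: $(A,\lambda_g)$ is a direct summand of $\Inf(A)\cong\Psi(\pi_*A)$ (using $1/|G|$, available in characteristic zero), $\pi_*$ is exact, and $\pi_*\ko_{Z_i}\cong\ko_Z$, so $\pi_*A\in\kt$ and thickness of $\kt$ finishes the proof. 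What your route buys is precision: it isolates the two inputs the paper leaves implicit, namely closure of $\kt$ under direct summands and the identity $\pi_*\ko_{Z_i}\cong\ko_Z$.

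Your closing caveat is well taken and is not a defect of your proof: the identity $\pi_*\ko_{Z_i}\cong\ko_Z$ requires each component $Z_i$ to map isomorphically onto $Z$, i.e.\ the \'etale cover $\pi^{-1}(Z)\to Z$ to split. The paper's setup ($\pi^*\ko_Z=\oplus_i\ko_{Z_i}$) is literally satisfied by the connected components of $\pi^{-1}(Z)$ even when these are nontrivial covers of $Z$, but its motivating example ($Z$ a rational, hence simply connected, curve on an Enriques surface) satisfies the splitting hypothesis, and the paper's own argument needs it just as yours does; stating it explicitly, as you do, is the right call. One small correction to your last paragraph: the identification of $(\kt')^G$ with $\{B\in\Db(Y):\pi^*B\in\kt'\}$ is not an output of the argument in the non-split case --- it is essentially the definition of $(\kt')^G$ transported through $\Psi$; the content of the proposition is precisely whether this descent category equals $\langle\ko_Z\rangle$, which is what the splitting hypothesis guarantees.
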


\begin{proof}
Clearly, $\oplus_i \ko_{Z_i}$ can be linearised, hence $\ko_Z \in (\kt')^G$ and therefore $\kt \subset (\kt')^G$. On the other hand, $\kt'$ is built from the $\ko_{Z_i}$ by taking iterated extensions and shifts, whose support is always contained in the union of the $\ko_{Z_i}$. Therefore the only objects which can be linearised are generated by $\oplus_i \ko_{Z_i}$ and therefore the other inclusion holds as well. 
\end{proof}

\section{Applications}\label{Applications}
%\begin{prop}
%Let $\kt$ be a triangulated category equipped with an action of a finite group $G$ such that the action lifts to an enhancement $\ka$. Furthermore, let $\Phi$ be a quasi-equivalence of $\ka$ such that $(H^0(\Phi))^{-1}$ can be represented by a DG-functor $\Psi$, by abuse of notation denoted by $\Phi^{-1}$. Consider the action of $G$ given by $g\mapsto \Phi\circ g^*\circ \Phi^{-1}$ and denote the DG-category linearised with respect to this action by $\ka^{\widetilde{G}}$. Then $H^0(\ka^G)\cong H^0(\ka^{\widetilde{G}})$.
%\end{prop}

\subsection{Linearisations with respect to a torsion canonical bundle}
Having checked the above we will look at the case of a linearisation with respect to a line bundle twist. Recall (Proposition \ref{lin-abelian}) that given an action of a finite group $G$ on an abelian category, the category of linearised objects is also abelian. We will now consider the following situation. Let $S$ be a variety whose canonical bundle is of finite order, fix an isomorphism $f\colon\omega_S^n\cong \ko_S$ and consider the global spectrum $\widetilde{S}$ of the corresponding sheaf of $\ko_S$-algebras. Using $f$, the sheaf $\ko_S\oplus \omega_S\oplus\cdots \omega_S^{n-1}$ becomes an $\ko_S$-algebra and $\widetilde{S}$ has a fixed point-free automorphism $\tau$ of order $n$ corresponding to the action of $\omega_S$. Denote the quotient morphism $\widetilde{S}\rightarrow S$ by $\pi$. The isomorphism $f$ induces an action of $\IZ/n\IZ$ on the category of (quasi-)coherent sheaves on $S$ by sending $1$ to the functor $(-)\otimes \omega_S$. We can consider sheaves linearised with respect to this action. Any such sheaf $\kf$ has, in particular, the property that $\kf\cong \kf\otimes \omega_S$. This holds, for example, for $\kf=\oplus_{k=0}^{n-1}\omega_S^k$.

Now recall that by \cite[Ex.\ II.5.17]{Hartshorne} there exists an equivalence between $\(Q)Coh(\widetilde{S})$ and the category of (quasi-)coherent $\pi_*\ko_{\widetilde{S}}$-modules. Note that the pullback of $f$ is the identity morphism of $\ko_{\widetilde{S}}$ and that the canonical isomorphism $\pi_*\ko_{\widetilde{S}}\cong\oplus_{k=0}^{n-1}\omega_S^k$ is an isomorphism of $\ko_S$-algebras.

\begin{lem}
Let $\kf$ be a (quasi-)coherent sheaf on $\widetilde{S}$. Then $\pi_*(\kf)$ is linearised with respect to the above described action of $\IZ/n\IZ$ on the category of (quasi-)coherent sheaves on $S$.
\end{lem}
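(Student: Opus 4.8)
The plan is to read the linearisation directly off the module structure that $\pi_*\kf$ carries over the pushforward algebra. By the equivalence recalled above, $\pi_*\kf$ is a (quasi-)coherent module over the $\ko_S$-algebra $\ka:=\pi_*\ko_{\widetilde S}\cong\bigoplus_{k=0}^{n-1}\omega_S^k$. Restricting the module action to the degree-$k$ summand gives $\ko_S$-linear multiplication maps $m_k\colon\omega_S^k\otimes_{\ko_S}\pi_*\kf\mor\pi_*\kf$ for $0\le k\le n-1$, with $m_0=\id$. I would then define the required maps $\lambda_k\colon\pi_*\kf\isomor\pi_*\kf\otimes\omega_S^k=k^*(\pi_*\kf)$ to be the inverses of the $m_k$, composed with the obvious commutativity isomorphism $\omega_S^k\otimes\pi_*\kf\cong\pi_*\kf\otimes\omega_S^k$.

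The first point is that $m_1$ is an isomorphism. This is local on $S$, so I would trivialise $\omega_S$ by a local generator $t$. Since $f$ carries the generator $t^n$ of $\omega_S^n$ to a generator $u$ of $\ko_S$, that is, to a unit, the algebra $\ka$ becomes $\ko_S[t]/(t^n-u)$, in which $t$ is invertible with inverse $u^{-1}t^{n-1}$. Hence multiplication by $t$ is bijective on every $\ka$-module, and since $m_1$ is locally this multiplication it is an isomorphism; the same local argument shows each $m_k$ is an isomorphism as well.

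It then remains to verify the two linearisation axioms. The relation $\lambda_0=\id$ is immediate from $m_0=\id$, and the cocycle condition $\lambda_{k+l}=l^*(\lambda_k)\circ\lambda_l$ is, after inverting and absorbing the commutativity constraints, precisely the associativity identity $m_{k+l}=m_k\circ(\id_{\omega_S^k}\otimes m_l)$ of the module action under the canonical identification $\omega_S^k\otimes\omega_S^l\cong\omega_S^{k+l}$. In particular it holds whenever $k+l<n$, and one checks in the same spirit that the construction is functorial in $\kf$.

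The step requiring genuine care — and the main obstacle — is the wrap-around in $\IZ/n\IZ$: one must check that the composite building $\lambda_n$ out of $n$ copies of $\lambda_1$ coincides with $\lambda_0=\id$ after the fixed identification $f\colon\omega_S^n\cong\ko_S$. This is exactly the compatibility encoded in the algebra $\ka$, since the product $\omega_S\otimes\omega_S^{n-1}\mor\omega_S^n$ followed by $f$ is the multiplication whose image lies in the degree-$0$ summand $\ko_S$, and the latter acts on $\pi_*\kf$ as the identity. Tracking this identification through the iterated $m_1$ is the one place where the choice of $f$, and not merely the torsion of $\omega_S$, is used, and it is what turns $(\pi_*\kf,\lambda_k)$ into a genuine $\IZ/n\IZ$-linearisation.
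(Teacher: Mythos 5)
Your proof is correct, but it takes a different route from the paper's. The paper constructs the linearisation by the projection formula: using that $\pi^*\omega_S$ is canonically trivial (the pullback of $f$ is the identity of $\ko_{\widetilde{S}}$), it gets a canonical isomorphism $\alpha\colon \pi_*(\kf)\cong\pi_*(\kf\otimes\pi^*\omega_S)\cong\pi_*(\kf)\otimes\omega_S$, and then disposes of the cocycle and wrap-around conditions by canonicity of $\alpha$ together with the fact that $f$ enters the very definition of $\pi$ -- offering as an alternative a check after applying the faithful functor $\pi^*$, where everything becomes the identity on $\kf\oplus\tau^*(\kf)\oplus\cdots$. You instead work entirely on $S$: you read the maps $m_k\colon\omega_S^k\otimes\pi_*\kf\mor\pi_*\kf$ off the $\pi_*\ko_{\widetilde{S}}$-module structure of $\pi_*\kf$, prove they are isomorphisms by the local computation that $t$ is a unit in $\ko_U[t]/(t^n-u)$, and obtain both linearisation axioms from associativity of the module action, with $f$ entering through the degree-wrapping in the algebra multiplication. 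The two constructions produce the same maps (the projection-formula isomorphism is, up to the canonical trivialisation of $\pi^*\omega_S$, inverse to $m_1$), but the verifications are genuinely different. What your version buys: the role of the chosen isomorphism $f$, as opposed to the mere torsion of $\omega_S$, is made completely explicit, and your argument is literally inverse to the paper's subsequent construction of a $\pi_*\ko_{\widetilde{S}}$-module structure from a linearised sheaf $(\kf,\alpha)$, so the paper's remark that ``these two constructions are inverse to each other'' becomes transparent and the ensuing equivalence $\(Q)Coh(\widetilde{S})\cong\(Q)Coh^{\IZ/n\IZ}(S)$ needs essentially no further work. What the paper's version buys: it is shorter, avoids local trivialisations and the symmetric-monoidal bookkeeping you rightly flag when absorbing commutativity constraints, and stays functorial throughout.
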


\begin{proof}
We use the projection formula to get an isomorphism
\[\alpha\colon \pi_*(\kf) \cong\pi_*(\kf\otimes \ko_{\widetilde{S}})\cong \pi_*(\kf\otimes \pi^*(\omega_S))\cong \pi_*(\kf)\otimes\omega_S.\]
Since the isomorphism in this formula is canonical, the morphism $(\alpha\otimes \id)\circ \alpha$, resp.\ further compositions are also canonical. On the other hand, composing the $n$-fold composition with $f$ gives the identity because $f$ enters in the very definition of $\widetilde{S}$ and hence of $\pi$. Alternatively, and briefly assuming $n=2$ to simplify notation, one can use that $\pi^*$ is a faithful functor and the fact that the pullback of $f\circ (\alpha\otimes \id)\circ \alpha$ is the identity map of $\pi^*\pi_*\kf\cong \kf\oplus \tau^*(\kf)$.
\end{proof}

Now, given a linearised sheaf $(\kf,\alpha)$ we can define a structure of $\pi_*\ko_{\widetilde{S}}$-module on it by locally setting
\[(s,t)\cdot \gamma:=s\cdot \gamma +\alpha(t\otimes \gamma),\]
for sections of $\ko_S$, $\omega_S$ and $\kf$ respectively.
Clearly, these two constructions are inverse to each other and $\pi_*\ko_{\widetilde{S}}$-linearity translates to invariance with respect to the group action. Hence, we have

%\begin{cor}
%There exists a functor $\pi_*$ from $\(Q)Coh(X)$ to $\(Q)Coh^{\IZ/2\IZ}(S)$.\qqed
%\end{cor}

%We have the following easy

%\begin{prop}
%The functor $\pi_*$ is full and faithful.
%\end{prop}

%\begin{proof}
%Let $\phi\colon \kf\rightarrow \kf'$ be a map such that $\pi_*(\phi)=0$. Since $\pi^*\pi_*=\id\oplus \tau^*$, where $\tau$ is the fixed-point free involution on $X$, we have $0=\pi^*\pi_*(\phi)=\phi\oplus \tau^*(\phi)$. Hence, $\pi_*$ is faithful. To see that it is full, note that
%\[\Hom(\pi_*(\kf),\pi_*(\kf'))\cong\Hom(\pi^*\pi_*(\kf),(\kf'))\cong\Hom(\kf,\kf')\oplus \Hom(\tau^*(\kf),\kf'),\]
%and the latter group does not induce morphisms compatible with the linearisations. 
%\end{proof}

%Let $(\kg,\lambda)$ be an element in $\(Q)Coh^{\IZ/2\IZ}(S)$. Then, in particular, $\kg\cong \kg\otimes \omega_S$ and hence there exists a sheaf $\kf$ on $X$ such that $\pi_*(\kf)=\kg$. %The only thing left to check is that the linearisations are the same, but if they are not, then $\pi_*(\tau^*(\kf))=\kg$ will have this property.
%Thus we have proved the

\begin{prop}
There is an equivalence $\(Q)Coh(\widetilde{S})$ and $\(Q)Coh^{\IZ/n\IZ}(S)$.\qqed 
\end{prop}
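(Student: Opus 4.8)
The plan is to establish the equivalence between $\(Q)Coh(\widetilde{S})$ and $\(Q)Coh^{\IZ/n\IZ}(S)$ by assembling the two lemmas already proven in this subsection into a pair of mutually inverse functors. The key observation is that by \cite[Ex.\ II.5.17]{Hartshorne} we already have an equivalence between $\(Q)Coh(\widetilde{S})$ and the category of (quasi-)coherent $\pi_*\ko_{\widetilde{S}}$-modules, so it suffices to identify this latter category with $\(Q)Coh^{\IZ/n\IZ}(S)$. Thus the real content is a dictionary between the $\pi_*\ko_{\widetilde{S}}$-module structure and the $\IZ/n\IZ$-linearisation, both living on $S$ via the isomorphism $\pi_*\ko_{\widetilde{S}}\cong \oplus_{k=0}^{n-1}\omega_S^k$ of $\ko_S$-algebras.

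First I would define the functor from $\(Q)Coh^{\IZ/n\IZ}(S)$ to $\pi_*\ko_{\widetilde{S}}$-modules using exactly the formula already displayed, namely sending a linearised sheaf $(\kf,\alpha)$ to $\kf$ equipped with the multiplication $(s,t)\cdot \gamma := s\cdot\gamma + \alpha(t\otimes\gamma)$ for local sections $s$ of $\ko_S$, $t$ of $\omega_S$, and $\gamma$ of $\kf$ (with the obvious extension to sections of the higher powers $\omega_S^k$). Conversely, the preceding lemma produces, for any $\kf \in \(Q)Coh(\widetilde{S})$, a canonical linearisation $\alpha$ on $\pi_*(\kf)$, which supplies the functor in the other direction. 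I would then check that a morphism of sheaves is $\pi_*\ko_{\widetilde{S}}$-linear if and only if it is $G$-invariant with respect to the linearisations, which is the statement already asserted in the running text immediately before the proposition.

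The main verification, and the step I expect to be the principal obstacle, is checking that the multiplication formula actually defines an associative $\pi_*\ko_{\widetilde{S}}$-module structure, i.e.\ that the linearisation cocycle condition $\lambda_{gh}=h^*(\lambda_g)\lambda_h$ is exactly what is needed for associativity of multiplication by sections of the $\ko_S$-algebra $\oplus_{k=0}^{n-1}\omega_S^k$. Here the crucial input is that the $n$-fold composition of $\alpha$, followed by $f$, is the identity; this is precisely the normalisation built into the definition of $\widetilde{S}$ and recorded in the previous lemma, and it is what makes the product $\omega_S^{k}\otimes\omega_S^{l}\to\omega_S^{k+l}$ (using $f$ to wrap around when $k+l\geq n$) compatible with iterated application of $\alpha$. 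Once associativity is confirmed, the fact that the two constructions are mutually inverse is essentially formal, since both amount to remembering the same datum $\alpha$.

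Finally I would remark that the two constructions are inverse to each other, as already stated, so composing with the equivalence of \cite[Ex.\ II.5.17]{Hartshorne} yields the desired equivalence $\(Q)Coh(\widetilde{S})\cong \(Q)Coh^{\IZ/n\IZ}(S)$. The whole argument is thus a matter of unwinding definitions rather than producing new geometry, and no further analysis beyond the compatibility of $f$ with the cocycle condition is required.
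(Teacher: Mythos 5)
Your proposal is correct and follows essentially the same route as the paper: the proposition is stated with its proof being precisely the preceding discussion (the Hartshorne equivalence, the lemma producing the linearisation on $\pi_*(\kf)$, and the module-structure formula $(s,t)\cdot\gamma = s\cdot\gamma + \alpha(t\otimes\gamma)$, with linearity corresponding to $G$-invariance). Your additional attention to the associativity/cocycle compatibility, hinging on the $n$-fold composition of $\alpha$ composed with $f$ being the identity, is exactly the point the paper's lemma establishes, so nothing is missing and nothing differs in substance.
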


\begin{cor}\label{Enriques-K3}
There exists an equivalence $\Db(S)^{\IZ/n\IZ}\cong \Db(\widetilde{S})$.
\end{cor}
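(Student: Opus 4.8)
The plan is to mimic the proof of Proposition \ref{classical-picture}, using the abelian equivalence of the previous proposition in place of the equivalence $\Coh([X/G])\cong\Coh^G(X)$ employed there. Write $\Db(S)^{\IZ/n\IZ}=H^0((\ka^G)^{pretr})$ with $\ka=\Db_{DG}(S)$, and enhance $\Db(\widetilde S)$ by the DG-category $\kb:=\Db_{DG}(\widetilde S)$ of bounded-below injective complexes with bounded coherent cohomology. Since $\tau$ is fixed-point-free, $\pi\colon\widetilde S\to S$ is finite and \'etale (it is the free quotient $\widetilde S\to[\widetilde S/(\IZ/n\IZ)]=S$), so $\pi^*$ is exact while $\pi_*$ is exact and faithful. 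I would then construct a DG-functor $\kb\to\ka^G$ by sending an injective complex $\widetilde\kf^\bullet$ on $\widetilde S$ to the complex $\pi_*\widetilde\kf^\bullet$ equipped with the canonical termwise linearisation produced in the lemma preceding this corollary, and acting as $\pi_*$ on morphisms. Applied term by term and on the complexes of morphisms, the previous proposition says exactly that $\pi_*$ is fully faithful onto linearised sheaves, so this functor is quasi fully faithful on the DG-level.

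The crucial point is to reconcile the two injectivity conditions, that is, to show that the abelian equivalence restricts to a bijection between injective sheaves on $\widetilde S$ and linearised sheaves $(\kf,\alpha)$ whose underlying sheaf $\kf$ is injective on $S$. One inclusion is immediate: as $\pi^*$ is exact, its right adjoint $\pi_*$ preserves injectives, so $\pi_*\widetilde\kf$ is injective on $S$ whenever $\widetilde\kf$ is injective on $\widetilde S$. For the converse I would argue as in the lemma preceding Proposition \ref{classical-picture}: since $\pi$ is \'etale one has $\omega_{\widetilde S}\cong\pi^*\omega_S$ (the ramification divisor is trivial), so the Serre-duality computation given there shows verbatim that $\pi^*$ carries injective sheaves on $S$ to injective sheaves on $\widetilde S$. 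Because $n$ is invertible, the inclusion $\ko_S\to\pi_*\ko_{\widetilde S}$ splits via $\tfrac1n$ times the trace, so the counit $\pi^*\pi_*\widetilde\kf\to\widetilde\kf$ is a split epimorphism and $\widetilde\kf$ is a direct summand of $\pi^*\pi_*\widetilde\kf$; hence if $\pi_*\widetilde\kf$ is injective on $S$ then $\pi^*\pi_*\widetilde\kf$ is injective on $\widetilde S$, and so is its summand $\widetilde\kf$.

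With the injectivity conditions matched, the DG-functor $\kb\to\ka^G$ is essentially surjective up to closed degree-zero isomorphism: every object of $\ka^G$ is a complex of injective-on-$S$ linearised sheaves, each term of which is $\pi_*$ of an injective sheaf on $\widetilde S$ by the converse above, and these terms assemble into an injective complex on $\widetilde S$ mapping to the given object. Thus $\kb\to\ka^G$ is a DG-equivalence. Since $\ka$ is strongly pretriangulated, so is $\ka^G$ by Proposition \ref{linearisations-pretriangulated}, whence $\ka^G\to(\ka^G)^{pretr}$ is a DG-equivalence as well; passing to $H^0$ yields $\Db(\widetilde S)=H^0(\kb)\cong H^0(\ka^G)\cong H^0((\ka^G)^{pretr})=\Db(S)^{\IZ/n\IZ}$.

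I expect the main obstacle to be precisely the reconciliation of the two injectivity conditions in the second paragraph: the enhancement $\ka^G$ is built from complexes whose \emph{underlying} sheaves are injective on $S$, whereas $\kb$ uses injectivity on $\widetilde S$, and it is exactly the \'etale hypothesis together with the invertibility of $n$ that forces the two notions to agree.
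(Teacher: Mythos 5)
Your proposal is correct and takes essentially the same approach as the paper: the paper's own proof is a two-line sketch invoking the abelian equivalence of the preceding proposition, the compatibility of injectivity under $\pi_*$, and ``similar arguments as in Section 4'', and your argument is precisely that plan carried out in detail (including the converse matching of injectives, which the paper leaves implicit). The only loose point is deriving the splitting of the counit $\pi^*\pi_*\widetilde{\kf}\to\widetilde{\kf}$ from the trace --- the trace splits the \emph{unit} $\id\to\pi_*\pi^*$, while the counit splits because $\pi$ is a $\IZ/n\IZ$-torsor, so $\pi^*\pi_*\cong\oplus_g g^*$ exactly as in the paper's Section 4 --- but this is immediately repairable and does not affect the argument.
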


\begin{proof}
This follows from the proposition, the fact that $\pi_*(\kf)$ of an injective sheaf $\kf$ is an injective $\pi_*\ko_{\widetilde{S}}$-module and with similar arguments as in Section \ref{classical}.
\end{proof}

\subsection{The category generated by a spherical object}
Let $X$ be a smooth projective variety of dimension $d$. Recall that an object $E$ in $\Db(X)$ is $d$-spherical if $E\otimes \omega_X\cong E$ and if $\Hom(E,E)=\Hom(E,E[d])=\IC$ and $0$ otherwise. More generally, one can define a $d$-spherical object in a triangulated category $\kt$ to be an object $E$ with the property that the graded endomorphism algebra 
\[B=\bigoplus_{ p\in \IZ} \Hom_\kt (E,E[p])\]
is isomorphic to $\IC[s]/(s^2)$ and $s$ is of degree $d$.

For example, any line bundle on a Calabi--Yau variety of dimension $d$ (that is, $\omega_X\cong\ko_X$ and $H^i(X,\ko_X)=0$ for $i\neq 1,d$) is a $d$-spherical object. The interest in these objects stems, in particular, from the fact that one can associate an autoequivalence of $\Db(X)$ to any spherical object, the so-called \emph{spherical twist} (see \cite{ST}). 

Recall that an object $E$ in $\Db(X)$ is \emph{exceptional} if $\Hom(E,E)=\IC$ and $\Hom(E,E[k])=0$ for all $k\neq 0$. Note that spherical objects are usually studied on Calabi--Yau varieties while the most natural environment for exceptional objects are probably Fano varieties. Nevertheless, there is a connection between spherical objects and exceptional objects, see, for example \cite[Subsect.\ 3.3]{ST}. 

In \cite[Thm.\ 2.1]{KYZ} the authors determined the structure of the triangulated category generated by a $d$-spherical object ($d$ is arbitrary). Denoting this category by $\kt_d$, there exists an equivalence between $\kt_d$ and the perfect derived category $\Perf(B)$ of the algebra $B$ introduced above (with $B$ corresponding to the spherical object), which is considered as a DG-algebra with trivial differential. Recall that $\Perf(B)$ is the smallest thick triangulated subcategory of the derived category of $B$ which contains $B$. Since $B$ is a local ring, $\Perf(B)$ is just the homotopy category of bounded complexes of finitely generated free $B$-modules. The enhancement one uses here is the DG-category associated to the additive category of finitely generated free $B$-modules. 

In \cite[Lem.\ 2.3]{FY} the group of autoequivalences of $\kt_d$ which admit DG-lifts was determined. Namely, it is isomorphic to $\IC^*\times \IZ$, where $\IZ$ corresponds to the action of powers of the shift functor and an element $a$ in $\IC^*$ acts by the functor induced by the ring isomorphism $\varphi_a\colon B\rightarrow B$, $s\mapsto a\cdot s$. Note that for any $n\in \IZ$ we have that $G=\IZ/n\IZ$ acts on $\kt_d$ by identifying $G$ with the group of $n$-th roots of unity (clearly, the group action is well-defined).

Let us first spell out what a linearisation in a special case is, namely for $n=2$ and the $B$-module $B$ itself. Note that we are therefore working in the category of modules. Acting by the non-trivial group element means changing the module structure of $B$ as a $B$-module: The element $s$ acts on an element of $B$ not by multiplication with $s$ itself, but with $-s$. Let $\lambda_g\colon B \rightarrow (-1)^*B$ send $1$ to $\alpha+\beta s$. In order for $\lambda_g$ to be $B$-linear, we then must have $\lambda_g(s)=-\alpha s$. Furthermore, if we want $\lambda_g$ to be linearisation, it has to be order 2, so we conclude that $\alpha^2=1$ (and $\beta$ is arbitrary). Thus, for any $\beta \in \IC$ we get linearisations $\lambda_g^{1,\beta}$ ($1 \mapsto 1+\beta s$) and $\lambda_g^{-1,\beta}$ ($1 \mapsto -1+\beta s$) of $B$. However, there is the following

\begin{lem}
The linearised objects $(B,\lambda_g^{1,\beta})$ and $(B,\lambda_g^{1,\beta'})$ are isomorphic for $\beta \neq \beta'$. A similar statement holds for $\lambda_g^{-1,\bullet}$.
\end{lem}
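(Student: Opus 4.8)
The plan is to exhibit an explicit isomorphism of linearised objects, i.e.\ a $B$-linear map $\varphi\colon B\to B$ which is invariant with respect to the two linearisations. Recall that a morphism of linearised objects $(B,\lambda_g^{1,\beta})\to (B,\lambda_g^{1,\beta'})$ is a $B$-linear map $\varphi\colon B\to B$ such that $g^*(\varphi)\circ\lambda_g^{1,\beta}=\lambda_g^{1,\beta'}\circ\varphi$, where $g$ is the non-trivial element of $\IZ/2\IZ$. Since $B=\IC[s]/(s^2)$ is local with residue field $\IC$, any $B$-linear endomorphism of $B$ is multiplication by some element $\gamma=c+ds\in B$, and such a map is an isomorphism precisely when $c\neq 0$ (the unit condition). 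So the whole problem reduces to solving one invariance equation for $\gamma$.

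First I would write the maps out concretely. The functor $g^*$ changes the $B$-module structure by $s\mapsto -s$, so on the underlying $\IC$-vector space $g^*$ acts as the algebra automorphism $\sigma\colon c+ds\mapsto c-ds$. The linearisation $\lambda_g^{1,\beta}\colon B\to g^*B$ sends $1\mapsto 1+\beta s$, hence as a $\IC$-linear map it is multiplication by $1+\beta s$ followed by reading the target with the twisted module structure (indeed $B$-linearity over the twisted structure forces $s\mapsto -s\cdot(1+\beta s)=-s$, consistently with the given $\lambda_g^{1,\beta}(s)=-s$). With $\varphi=\cdot\gamma$ for $\gamma=c+ds$, the invariance condition becomes, after tracking the twist, an equation of the form
\[
\sigma(\gamma)\,(1+\beta s)=(1+\beta' s)\,\gamma
\]
in $B$.

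Next I would solve this equation. Writing $\gamma=c+ds$ and expanding, using $s^2=0$ and $\sigma(c+ds)=c-ds$, the constant terms force the coefficient $c$ to agree on both sides, while the $s$-terms give a single linear relation among $c,d,\beta,\beta'$. Concretely one finds that the coefficient of $s$ yields $c\beta-d=c\beta'+d$ (up to the exact bookkeeping of signs), i.e.\ a relation of the shape $d=\tfrac{c}{2}(\beta-\beta')$. Choosing $c=1$ (hence $\gamma$ a unit, so $\varphi$ is an isomorphism) and $d$ accordingly produces the desired isomorphism for any $\beta,\beta'$. The identical computation with $1+\beta s$ replaced by $-1+\beta s$ handles the $\lambda_g^{-1,\bullet}$ case, the only change being an overall sign that does not affect invertibility of $\gamma$.

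The main obstacle here is not conceptual but bookkeeping: one must be careful about the twist by $g^*$ entering on the left-hand side, since $\lambda_g$ lands in $g^*B$ rather than $B$, and it is easy to drop a sign or apply $\sigma$ to the wrong factor. Once the invariance equation is written correctly in $B=\IC[s]/(s^2)$, everything collapses to comparing coefficients of $1$ and $s$, and the existence of a \emph{unit} solution $\gamma$ is immediate because the constant term can be chosen freely to be nonzero. I would therefore devote the bulk of the write-up to pinning down the twisted invariance equation precisely, after which solvability and hence the claimed isomorphism follow at once.
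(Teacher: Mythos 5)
Your proposal is correct and follows essentially the same route as the paper: both exhibit the isomorphism as multiplication by a unit $\gamma = c + ds$ (the paper's $f(1)=x+ys$, $x\neq 0$) and reduce the invariance condition to a single linear equation on the $s$-coefficient, solvable for any $\beta,\beta'$. The sign discrepancy you flag ($d=\tfrac{c}{2}(\beta-\beta')$ versus the paper's $\beta'-\beta = 2y/x$) merely reflects which of the two objects is taken as the source of the morphism, and is immaterial since an isomorphism in either direction suffices.
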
  

\begin{proof}
A $B$-linear automorphism of $B$ is a map sending $1$ to $x+ys$ and $s$ to $xs$ for $x\neq 0$. Denote $\beta'-\beta$ by $z$. The map $f$ will commute with the linearisations if we choose $x,y$ such that $z=\frac{2y}{x}$.
\end{proof}

So, we can work with $\beta=0$. Denote $\lambda_g^{\pm 1,0}$ by $\lambda_g^{\pm 1}$.

\begin{lem}
The objects $(B,\lambda_g^1)$ and $(B,\lambda_g^{-1})$ are not isomorphic. The endomorphism ring of $(B,\lambda_g^1)$ resp.\ of $(B,\lambda_g^{-1})$ is isomorphic to $\IC$.
\end{lem}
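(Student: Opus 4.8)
The plan is to compute morphisms in the linearised category $\ka^G$ directly, using the explicit description of linearisations of $B$ worked out just before the statement. Recall that a morphism $(B,\lambda_g^{1})\to (B,\lambda_g^{-1})$ is a $B$-linear map $\varphi\colon B\to B$ which is $G$-invariant, i.e.\ which commutes with the two linearisations in the sense that $(-1)^*(\varphi)\circ \lambda_g^{1}=\lambda_g^{-1}\circ \varphi$. A $B$-linear map is determined by the image of $1$, say $\varphi(1)=x+ys$, and then $\varphi(s)=xs$. Writing out the commutativity condition using $\lambda_g^{1}(1)=1$, $\lambda_g^{-1}(1)=-1$ and the fact that $g^*$ twists the action of $s$ by $-1$, I expect to obtain a linear system in $x,y$ whose only solution is $x=y=0$; this forces $\Hom((B,\lambda_g^{1}),(B,\lambda_g^{-1}))=0$, and by symmetry the same holds in the other direction. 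Since a nonzero map in both directions is needed for an isomorphism, this shows $(B,\lambda_g^{1})\not\cong(B,\lambda_g^{-1})$.

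For the endomorphism ring, I would run the same computation with both source and target equal to $(B,\lambda_g^{1})$ (and separately for $(B,\lambda_g^{-1})$). Here the invariance condition becomes $(-1)^*(\varphi)\circ \lambda_g^{1}=\lambda_g^{1}\circ \varphi$, which for $\varphi(1)=x+ys$ should reduce the two free parameters to a single one: I expect the condition to kill the $s$-component (forcing $y=0$) while leaving $x$ unconstrained, giving exactly the scalar endomorphisms. Thus the endomorphism ring is $\IC$ in each case.

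The one subtlety to be careful about is bookkeeping of signs and of how $g^*=(-1)^*$ acts. The autoequivalence $(-1)^*$ is induced by the ring automorphism $\varphi_{-1}\colon B\to B$, $s\mapsto -s$; so $(-1)^*B$ is $B$ with $s$ acting as multiplication by $-s$, and $(-1)^*(\varphi)$ is the same underlying additive map viewed between the twisted modules. The verification that $\varphi$ really is a morphism in $\ka^G$ (and not merely additive) must use $B$-linearity with respect to the correct, twisted, module structures on each side, which is exactly where the constraint $\varphi(s)=\pm xs$ enters. Provided these signs are tracked consistently, the computation is purely a finite-dimensional linear-algebra check and presents no real obstacle; the main point is simply that the nonzero $\beta$-freedom has already been normalised away by the preceding lemma, so that the spaces $\Hom$ are genuinely computed on the normalised representatives $\lambda_g^{\pm1}$.

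Finally, since we are working in the homotopy category $H^0$ of the relevant DG-category and the objects $(B,\lambda_g^{\pm1})$ live in degree $0$ with $B$ a (local, self-injective) algebra, I would note that no higher homotopies interfere: morphisms of linearised objects here coincide with the closed degree-$0$ invariant maps modulo nothing, so the naive module-theoretic $\Hom$ computed above is the correct $\Hom$ in the linearised category. This closes both assertions.
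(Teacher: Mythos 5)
Your strategy (direct computation of invariant $B$-linear maps) is the same as the paper's, and your endomorphism computation is correct: invariance forces $y=0$, leaving the scalars free, so $\End\cong\IC$ in both cases. But the first half of your argument contains a genuine error. The invariance condition for a map $\varphi\colon (B,\lambda_g^{1})\rightarrow (B,\lambda_g^{-1})$, $\varphi(1)=x+ys$, does \emph{not} force $x=y=0$. Writing it out at the element $1$: on one side $\varphi(\lambda_g^{1}(1))=\varphi(1)=x+ys$, while on the other side $\lambda_g^{-1}(\varphi(1))=\lambda_g^{-1}(x+ys)=-x+ys$, using $\lambda_g^{-1}(1)=-1$ and $\lambda_g^{-1}(s)=s$ (the latter from $B$-linearity into the twisted module: $\lambda_g^{-1}(s)=(-s)\cdot(-1)=s$, which is the case $\alpha=-1$ of the rule $\lambda_g(s)=-\alpha s$ established before the statement). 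The condition is therefore $x+ys=-x+ys$, i.e.\ $x=0$ with $y$ unconstrained. Hence $\Hom((B,\lambda_g^{1}),(B,\lambda_g^{-1}))$ is one-dimensional, spanned by the map $1\mapsto s$; it is not zero, and the same computation shows the Hom space in the other direction is also one-dimensional. Indeed, the paper says this explicitly right after Proposition \ref{sph-exc}: the two exceptional objects are \emph{not} orthogonal. So your route to non-isomorphism via vanishing Hom spaces breaks down.

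The non-isomorphism nevertheless follows immediately from the computation you set up, by the argument the paper actually uses: any invariant map has $x=0$, hence sends $1\mapsto ys$ and $s\mapsto 0$; by the lemma preceding the statement, a $B$-linear automorphism of $B$ must have $x\neq 0$ (equivalently, a map with $x=0$ is nilpotent), so no invariant map can be an isomorphism. With this repair --- replacing ``both Hom spaces vanish'' by ``every invariant map is non-invertible'' --- your proof coincides with the paper's.
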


\begin{proof}
If $f\colon B\rightarrow B$, $1 \mapsto x+ys$ is a map commuting with the linearisations, then we have
\[x+ys=f\lambda_g^1(1)=\lambda_g^{-1}f(1)=\lambda_g^{-1}(x+ys)=-x+ys,\]
hence $x=0$ and $f$ cannot be an isomorphism.

Concerning the second statement we only deal with the first case. Given an $f$ as before we have
\[x+ys=f\lambda_g^1(1)=\lambda_g^1f(1)=x-ys,\]
hence $y=0$ and the endomorphisms of $(B,\lambda_g^1)$ (resp.\ of $(B,\lambda_g^{-1})$) are therefore the morphisms $f$ sending $1$ to $x \in \IC$.
\end{proof}

Combining everything we have

%A similar argument is valid for any free $B$-module of finite rank. Note that this, in particular, implies that only the scalar maps from $B$ to $B$ commute with the linearisation and hence $B$ becomes an exceptional object. This proves the following

\begin{prop}\label{sph-exc}
Let $\kt_d$ be the triangulated category generated by a $d$-spherical object $E$ and consider the action of $G=\IZ/2\IZ$ on it defined above. Then $E$ admits two distinct linearisations $\lambda_g^1$ and $\lambda_g^{-1}$ and the objects $(E,\lambda_g^1)$ and $(E,\lambda_g^1)$ are exceptional in the linearised category. In particular, the linearised category $\kt_d^G$ contains the derived category of $\IC$-vector spaces as a full admissible (the admissibility follows from \cite[Thm.\ 3.2]{Bondal}) triangulated subcategory.\qqed
\end{prop}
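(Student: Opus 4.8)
The plan is to carry out the whole computation inside $\Perf(B)$ via the equivalence $\kt_d\cong\Perf(B)$, identifying the spherical object $E$ with the free module $B$, whose graded endomorphism algebra is $B=\IC\cdot 1\oplus\IC\cdot s$ with $s$ in degree $d$ and zero differential. The two distinct linearisations $\lambda_g^{1},\lambda_g^{-1}$, the fact that they are non-isomorphic, and the computation of the morphisms compatible with them are already furnished by the two preceding lemmas, so the role of the proposition is to reinterpret that computation and draw the consequence. The conceptual point that makes everything work is that the generator $s$, which carries the spherical (degree-$d$) self-extension, is \emph{anti-invariant}: the acting automorphism is $\varphi_{-1}\colon s\mapsto -s$, so $s\notin B^{G}$ and the degree-$d$ class is destroyed upon linearisation.

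First I would record the graded self-morphisms of $(E,\lambda_g^{\pm1})$ in $\kt_d^{G}$. A graded $B$-linear endomorphism of $B$ is multiplication by some $b\in B$, and the compatibility with either linearisation forces $b$ to be fixed by $\varphi_{-1}$, i.e.\ $\varphi_{-1}(b)=b$; this is precisely the computation ``$y=0$'' carried out in the last lemma. Hence
\[
\Hom_{\kt_d^{G}}\bigl((E,\lambda_g^{\pm1}),(E,\lambda_g^{\pm1})[p]\bigr)\cong (B^{p})^{G},
\]
which is $\IC$ for $p=0$ and $0$ otherwise, the degree-$d$ term $\IC\cdot s$ being annihilated because $\varphi_{-1}(s)=-s$. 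This shows that $(E,\lambda_g^{1})$ and $(E,\lambda_g^{-1})$ are exceptional.

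For the final clause I would invoke the standard fact that an exceptional object in a triangulated category generates a full triangulated subcategory equivalent to $\Perf(\IC)$, the bounded derived category of finite-dimensional $\IC$-vector spaces, the object itself corresponding to $\IC$. Applying this to $(E,\lambda_g^{1})$ produces the required subcategory, whose admissibility is \cite[Thm.\ 3.2]{Bondal}.

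The step requiring the most care is the displayed identification, which must be read in the linearised \emph{triangulated} category $H^{0}((\ka^{G})^{pretr})$ and not merely in $\ka^{G}$: one has to know that taking $H^{\bullet}$ of the pretriangulated hull commutes with passage to the $G$-invariant (twisted) subcomplex of morphisms. Here this is immediate, since the Hom-complex $\Hom_{\ka}(E,E)=B$ has zero differential, so cohomology and invariants are visibly computed on the same graded pieces; in general one would instead use that forming invariants is exact because $|G|=2$ is invertible in $\IC$. With that in hand the remaining assertions — the vanishing in degrees $\neq 0$ and the generation and admissibility of the subcategory — are routine.
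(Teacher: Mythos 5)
Your proposal is correct and follows essentially the same route as the paper: the content of the proposition is exactly the computation in the two preceding lemmas (compatibility with $\lambda_g^{\pm1}$ forces $y=0$, i.e.\ kills the degree-$d$ generator $s$, leaving graded endomorphism algebra $\IC$), which you reproduce in the equivalent form of taking $\varphi_{-1}$-invariants of $B$. Your added remark that invariants commute with cohomology (trivially here, since the differential on $B$ is zero) is a careful point the paper leaves implicit, but it is a refinement of the same argument rather than a different one.
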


Note that the exceptional objects $E_1=(E,\lambda_g^1)$ and $E_2=(E,\lambda_g^{-1})$ are not orthogonal. For larger $n$ the situation becomes more complicated.

\end{document}